\documentclass[12pt,reqno]{amsart}

\usepackage{setspace}
\setlength{\textwidth}{16truecm} 
\setlength{\textheight}{22.3truecm}

\usepackage[all]{xy}
\usepackage{amssymb,amsmath}

\usepackage{enumitem}
 
\calclayout
\newtheorem{dummy}{anything}[section]
\newtheorem{theorem}[dummy]{Theorem}
\newtheorem{lemma}[dummy]{Lemma}
\newtheorem{proposition}[dummy]{Proposition}

\newtheorem{conjecture}[dummy]{Conjecture}
\theoremstyle{definition}
\newtheorem{definition}[dummy]{Definition}

  \newtheorem{remark}[dummy]{Remark}

\newcommand{\bbZ}{\mathbb Z}
\newcommand{\bbQ}{\mathbb Q}
\newcommand{\bbS}{\mathbb S}
\newcommand{\bbF}{\mathbb F}
\newcommand{\bbC}{\mathbb C}
\newcommand{\cH}{\mathcal H}

\newcommand{\cD}{\mathcal D}
\newcommand{\bV}{\mathbf V}
\newcommand{\bA}{\mathbf A}

\DeclareMathOperator*{\colim}{colim}
\DeclareMathOperator*{\hocolim}{hocolim}

\DeclareMathOperator{\Hom}{Hom}
\DeclareMathOperator{\Ext}{Ext}
\DeclareMathOperator{\Rep}{Rep}
\DeclareMathOperator{\Inn}{Inn}

\DeclareMathOperator{\Map}{Map} 
\DeclareMathOperator{\Or}{Or}
\DeclareMathOperator{\rk}{rk}
\DeclareMathOperator{\Ind}{Ind}
\DeclareMathOperator{\Res}{Res}

\newcommand{\la}{\langle}
\newcommand{\ra}{\rangle} 
\def\maprt#1{\smash{\,\mathop{\longrightarrow}\limits^{#1}\,}}

\begin{document}

\title{Rank Three $p$-Group Actions on Products of Spheres}
\author{Erg\" un Yal\c c\i n}

\address{Department of Mathematics, Bilkent
University, Ankara, 06800, Turkey.}

\email{yalcine@fen.bilkent.edu.tr}

\keywords{}

\thanks{2010 {\it Mathematics Subject Classification.} Primary: 57S25; Secondary: 57S17, 55R91, 20D15.}

\thanks{Partially supported by T\" UB\. ITAK-B\. IDEB-2219.}

\date{\today}

\begin{abstract} Let $p$ be an odd prime. We prove that every rank three $p$-group acts freely and smoothly on a product of three spheres. To construct this action, we first prove a generalization of a theorem of L\" uck and Oliver on constructions of $G$-equivariant vector bundles. We also give some other applications of this generalization.
\end{abstract}

\maketitle

\section{Introduction} 
One of the classical problems in transformation group theory is the problem of classifying all finite groups that can act freely on a product of $k$ spheres for an arbitrary positive integer $k$. In one direction there is the conjecture which states that if a finite group $G$ acts freely on a product of $k$ spheres $X=\bbS ^{n_1}\times \cdots \times \bbS ^{n_k}$, then we must have $\rk(G)\leq k$, where $\rk(G)$ denotes the rank of the group $G$, defined as the largest integer $r$ such that $(\bbZ /p)^r \leq G$ for some prime $p$.

In the other direction, there is a conjecture by Benson and Carlson \cite{benson-carlson} in homotopy category which states that if $G$ is a finite group with $\rk (G)\leq k$, then it acts freely on a finite complex $X$ homotopy equivalent to a product of $k$ spheres. The Benson-Carlson conjecture is proved for many groups of small rank, in particular, it is proved to be true for all rank two finite groups which do not involve the group ${\rm Qd}(p)$ for any odd prime $p$ (see \cite{adem-smith}, \cite{jackson1}). For $p$-groups the Benson-Carlson conjecture is known to be true for all $p$-groups with rank $\leq 2$, and for all rank three $p$-groups when $p$ is an odd prime \cite[Theorem 1.1]{klaus}. 

It is shown by Milnor \cite{milnor} that the rank condition $\rk(G) \leq k$ is not sufficient for the existence of a free smooth action on a product of $k$ spheres. He proves, in particular, that the dihedral group $D_{2p}$ of order $2p$, where $p$ is an odd prime, cannot act freely on a manifold which has mod-$2$ homology of a sphere. However, for $p$-groups, there are no known necessary conditions on the group other than the rank condition for constructing free smooth actions. For example, when $G$ is a rank one $p$-group, then $G$ is a cyclic group or a generalized quaternion group, and one can find a unitary representation $V$ of $G$ such that $G$ acts freely and smoothly on the unit sphere $\bbS(V)$.

It is also known that every rank two $p$-group acts freely and smoothly on a product of two spheres. This is proved in \cite[Theorem 1.1]{unlu-yalcin2}, but the construction in this case is much more complicated. The main ingredient in the construction is a theorem of L\" uck and Oliver \cite[Theorem 2.6]{lueck-oliver} which provides a method for constructing $G$-equivariant vector bundles over a given finite dimensional $G$-CW-complex. One of the assumptions of this theorem is the existence of a finite group $\Gamma$ satisfying certain properties. In \cite{unlu-yalcin2}, fusion systems and biset theory were used to show that this finite group $\Gamma$ can be explicitly constructed in that case.  

It is reasonable to ask if the above results for $\rk(G)=1, 2$, holds more generally:

\begin{conjecture}\label{conj:main} Every finite $p$-group $G$ with $\rk(G)=k$ act freely and smoothly on a product of $k$ spheres.
\end{conjecture}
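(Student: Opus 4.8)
The plan is to prove Conjecture~\ref{conj:main} by induction on the rank $k$, building the free action one sphere factor at a time while lowering the rank of the isotropy subgroups at each stage. Concretely, I would construct, for each $0\le j\le k$, a closed smooth $G$-manifold $X_j$ that is homotopy equivalent to a product of $j$ spheres and whose isotropy subgroups all have rank $\le k-j$. The base $X_0$ is a point (isotropy rank $\le k$, vacuously) and the top stage $X_k$ is then a closed smooth $G$-manifold homotopy equivalent to a product of $k$ spheres on which $G$ acts freely, which is exactly what is wanted. Everything therefore reduces to a single \emph{rank-reduction step}: given $X_j$ with isotropy of rank $\le k-j$, produce a $G$-equivariant vector bundle $V_j\to X_j$ whose unit sphere bundle $X_{j+1}=\bbS(V_j)$ has isotropy of rank $\le k-j-1$ and remains homotopy equivalent to a product of $(j+1)$ spheres. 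The first step ($j=0$) is harmless, since a $G$-vector bundle over a point is just a representation: it suffices to choose a unitary representation $W$ of $G$ with $W^{E}=0$ for every $E\cong(\bbZ/p)^{k}$, so that $X_1=\bbS(W)$ is a linear sphere with isotropy of rank $\le k-1$. The remaining $k-1$ steps take place over a nontrivial complex and are where the real work lies.

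The rank of the isotropy drops precisely when the fibers of $V_j$ are \emph{fixed-point free} over the top-rank strata: for each $E\cong(\bbZ/p)^{k-j}$ still occurring inside an isotropy group, I would demand $(V_j)^{E}=0$ along $X_j^{E}$, so that $\bbS(V_j)^{E}=\emptyset$ and $E$ is eliminated from the isotropy. Since the surviving isotropy groups $H$ have rank $\le k-j$, and a rank-one $p$-group is cyclic or generalized quaternion, the final step ($k-j-1=0$) asks for genuine fixed-point-free representations of these $H$, while the intermediate steps ask for representations whose restriction to each top-rank elementary abelian has no trivial summand. The existence of a \emph{global} bundle $V_j$ realizing these prescribed local representations compatibly with $G$-fusion is exactly what the generalized L\"uck--Oliver theorem of this paper is designed to supply, provided one can exhibit the auxiliary finite group $\Gamma$ required by its hypotheses. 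With $V_j$ of suitable odd fiber dimension in hand, a Gysin-sequence argument controls the cohomology of $\bbS(V_j)$ and keeps it homotopy equivalent to a product of $(j+1)$ spheres; identifying the resulting closed manifold with a \emph{literal} product of $k$ spheres at the end proceeds as in the rank-two \cite{unlu-yalcin2} and rank-three cases.

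The main obstacle is the construction of $\Gamma$ (equivalently, the verification of the fusion-theoretic hypotheses of the L\"uck--Oliver construction) uniformly across all ranks. For $k=2$ this was done in \cite{unlu-yalcin2} by explicit fusion-system and biset-theoretic arguments, and for $k=3$ with $p$ odd it is the technical heart of the present paper; but no rank-independent recipe for $\Gamma$ is currently known, which is exactly why the conjecture is open for $k\ge 4$. I would attack this by isolating the requirement on $\Gamma$ as the problem of realizing a prescribed family of fixed-point-free representations of the bounded-rank sections of $G$ compatibly with the fusion system $\mathcal F_S(G)$, and then building $\Gamma$ as a suitable extension or fibered product governed by that fusion data. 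A secondary obstacle, already visible for non-$p$-groups through Milnor's $D_{2p}$ example and the ${\rm Qd}(p)$ phenomenon, is to certify that no analogous local obstruction survives for $p$-groups at higher rank; for $p$-groups one expects these obstructions to vanish, but controlling them at every stratum of $X_j$ as $k$ grows is the delicate point that the induction must absorb.
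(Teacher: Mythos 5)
You have not proved the statement, and you essentially say so yourself. The statement is stated in the paper as a conjecture precisely because it is open: the paper proves only the rank three case for odd $p$ (Theorem \ref{thm:main}), the rank two case being \cite{unlu-yalcin2} and rank one being classical. Your proposal reproduces the skeleton of that known strategy (a linear first sphere $\bbS(W)$ with $W^E=0$ for all maximal-rank elementary abelians $E$, then bundle constructions that lower the isotropy rank one step at a time, then a final step killing rank-one isotropy as in Theorem \ref{thm:rank1isotropy}), but the entire mathematical content of the rank-reduction step is delegated to ``exhibiting the auxiliary finite group $\Gamma$,'' and you concede that no rank-independent construction of $\Gamma$ is known and that this is why the conjecture is open for $k\ge 4$. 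A plan whose key step is acknowledged to be the open problem is a gap, not a proof.

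Two places where your outline also underestimates what is needed even at the ranks where the result is known. First, the input to the bundle construction is not just $\Gamma$: one needs a compatible family $\bV$ of genuine unitary representations on the family $\cH$ of surviving isotropy subgroups that is \emph{effective} (fixed-point free on all top-rank elementary abelians). In the paper this comes from Jackson's class function $\chi$ (Proposition \ref{pro:ClassFunction}), whose existence is a delicate character-theoretic fact proved only for rank three $p$-groups with cyclic center and $p$ odd \cite{jackson2,klaus}; your intermediate steps $1\le j\le k-2$ would each require such a class function for families of rank-$(k-j)$ subgroups, and no analogue is known. Second, even with $\bV_\chi$ in hand, the paper explicitly reports that it could \emph{not} find a single finite group $\Gamma$ through which $\bV_\chi$ factors; this is exactly why it proves the generalization of the L\"uck--Oliver theorem (Theorems \ref{thm:G-bundleconst} and \ref{thm:AlmostConnected}), where $\bV_\chi$ only factors through an almost strongly connected one-dimensional diagram of finite groups, built from the subfamilies $\cH_a$, $\cH_{e_i}$ determined by $Q\cong C_p\times C_p$ and $C_G(Q)$. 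So your fallback of assembling $\Gamma$ ``as a suitable extension or fibered product governed by fusion data'' is the very step that already failed at rank three. Finally, note that your induction, even if completed, would not settle the conjecture as stated: the case $p=2$ is excluded from the paper's theorem because Proposition \ref{pro:ClassFunction} relies on a normal subgroup $Q\cong C_p\times C_p$, guaranteed for noncyclic $p$-groups only when $p$ is odd, so already $k=3$, $p=2$ lies outside the reach of this strategy.
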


It is clear that this conjecture is true for abelian $p$-groups. More generally, when $G$ is a $p$-group of nilpotency class $\leq 2$, i.e., when $G/Z(G)$ is abelian, then   
the conjecture holds for $G$. This follows from Theorem 1.1 in  \cite{unlu-yalcin3}. In this paper we prove the following theorem which gives further evidence for this conjecture.

\begin{theorem}\label{thm:main} Let $p$ be an odd prime. Then, every rank three $p$-group acts freely and smoothly on a product of three spheres.
\end{theorem}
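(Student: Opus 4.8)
The plan is to realize the free action by constructing three spheres as unit spheres of equivariant vector bundles, following the general strategy of \cite{unlu-yalcin2} for rank two. First I would analyze the structure of rank three $p$-groups for odd $p$: I expect that a careful case division (using the classification of $p$-groups of small rank, together with the elementary fact that $G$ has a normal abelian subgroup $A$ of rank equal to $\rk(G)$ or of rank one less) will reduce the problem to controlling the fusion of elementary abelian subgroups. The key structural input should be that, modulo a suitable central cyclic subgroup or a rank-one normal subgroup, the quotient has rank at most two, so one can invoke the already-established rank two result from \cite{unlu-yalcin2} inductively and then ``lift'' the action by adding one more sphere.

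Next I would set up the inductive construction. The idea is to find a normal subgroup $N\trianglelefteq G$ (of rank one, for instance a central $\bbZ/p$) such that $G/N$ acts freely and smoothly on a product of two spheres $\bbS^{n_1}\times\bbS^{n_2}$ by the rank two result, pull this action back to $G$ so that only $N$ acts with nonfree isotropy, and then produce a third sphere $\bbS(V)$ on which the remaining isotropy (a rank one subgroup) acts freely. The third sphere would be the unit sphere of a $G$-equivariant vector bundle $E$ over $X=\bbS^{n_1}\times\bbS^{n_2}$ whose fiberwise representations are free away from the points fixed by $N$. Here I would invoke the generalized L\"uck--Oliver theorem promised in the abstract: it should let me build $E$ once I exhibit the auxiliary finite group $\Gamma$ encoding the required fusion-theoretic compatibility of local representations, exactly as in \cite{lueck-oliver} and \cite{unlu-yalcin2}.

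The main obstacle, as in the rank two case, will be the construction of this finite group $\Gamma$ (equivalently, the verification of the hypotheses of the generalized L\"uck--Oliver theorem). The representations one wants to glue are dictated by the local structure at each elementary abelian subgroup, and these must be made compatible across the fusion system of $G$; producing a single finite group $\Gamma$ through which all the relevant homomorphisms factor is a genuinely delicate fusion-theoretic and biset-theoretic problem. I expect this is precisely why the generalization of the L\"uck--Oliver theorem is needed: the rank three situation forces a more flexible set of hypotheses than the original theorem allows, and the bulk of the work lies in verifying that the compatible family of local data can be assembled into the global bundle. Once $\Gamma$ is in hand and the bundle $E$ is constructed, the free action is obtained by taking the product $X\times\bbS(E)$ (or the sphere bundle $\bbS(E)$ fibered over $X$), checking that all isotropy has been eliminated, and smoothing the resulting action; this final assembly should be routine given the machinery.
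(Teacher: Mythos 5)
There is a genuine gap, and it sits at the very first step of your plan. You propose to find a rank-one normal subgroup $N\trianglelefteq G$ (e.g.\ a central $\bbZ/p$) such that $G/N$ has rank at most two, apply the rank-two result of \cite{unlu-yalcin2} to $G/N$, and then add one more sphere to kill the isotropy $N$. But no such $N$ need exist: passing to a quotient of a $p$-group can \emph{increase} the rank. The standard counterexample is the extraspecial group $G$ of order $p^5$ and exponent $p$ ($p$ odd), which has rank three and cyclic center $Z\cong \bbZ/p$; its only rank-one normal subgroups are $1$ and $Z$, and $G/Z\cong(\bbZ/p)^4$ has rank four. So the inductive reduction you describe collapses exactly on the hard cases (the paper does make a reduction, but only to dispose of the easy case $\rk Z(G)\geq 2$ by taking two linear spheres induced from two independent central elements; the remaining case of cyclic center is where all the work is, and it is not accessible by your quotient argument). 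The paper's actual decomposition is different: the first sphere is the linear sphere $\bbS(V)$ with $V=\Ind_{\la c\ra}^G W$ for $c$ central of order $p$, whose isotropy groups $H$ satisfy $H\cap\la c\ra=1$ and hence have rank at most two; the second sphere is the fiber sphere of a $G$-vector bundle over $\bbS(V)$ whose fiber representations come from an \emph{effective class function} of Jackson--Klaus (Proposition \ref{pro:ClassFunction}), chosen so that every rank-two elementary abelian isotropy group acts fixed-point-freely, cutting the isotropy down to rank one; the third sphere is produced by Theorem \ref{thm:rank1isotropy} to remove the rank-one isotropy. Your proposal is missing the class-function input entirely, and without it there is no candidate for the fiber data of the bundle.

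The second, smaller inaccuracy is in how you describe the role of the generalized L\"uck--Oliver theorem. You write that the bulk of the work is ``producing a single finite group $\Gamma$'' and that the construction proceeds ``once $\Gamma$ is in hand.'' The point of the generalization (Theorem \ref{thm:G-bundleconst} and Theorem \ref{thm:AlmostConnected}) is precisely that the author could \emph{not} find a single finite group $\Gamma$ through which the compatible family $\bV_\chi$ factors. Instead the family $\cH$ is covered by subfamilies $\cH_a,\cH_{e_1},\dots,\cH_{e_m}$ (indexed by $C_G(Q)$ and the normalizers of representatives of maximal elementary abelian subgroups of type E), the family factors through a \emph{diagram} of finite groups over this poset, and the obstruction-theoretic argument is pushed through under an ``almost strongly connected'' hypothesis on the fixed-point posets $\cD_H$, using Lemma \ref{lem:ZeroExt} to handle the subgroups $H$ where $|\cD_H|$ fails to be simply connected. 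Hunting for a single $\Gamma$ is, by the paper's own account, the wrong target.
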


To prove Theorem \ref{thm:main}, we use a strategy similar to the strategy used in the rank two case. Let $G$ be a rank three $p$-group and let $V=\Ind _{\langle c\rangle}^G W$ denote the complex representation induced from $\la c \ra$, where $c$ is a central element of order $p$ in $G$, and $W$ is a one-dimensional non-trivial representation of $\langle c \rangle$. The   isotropy subgroups $G_x$ of the linear sphere $X=S(V)$ satisfy the property that $G_x \cap \la c \ra =1$. In particular, $\rk(G_x)\leq 2$ for every $x\in X$.  

Let $\cH$ denote the family of all subgroups $H$ of $G$ such that $H \cap \la c\ra =1$. If $\chi: G \to \bbC$ is a class function whose restriction to each $H \in \cH$ is a character, then $\chi$ can be used to define a compatible family of representations $\bV_{\chi}=\{ V_H : H \to U(n) \colon H \in \cH \}$ (see Definition \ref{def:compatible family}).  Moreover, if $\chi$ is an \emph{effective} class function (for every elementary abelian subgroup $E\leq G$ with maximum rank, $\la \chi |_{E}, 1_E \ra =0$), then for every $H\in \cH$, the $H$-action on $\bbS(V_H)$ will have rank one isotropy. It has been shown by M. Klaus in \cite[Proposition 3.3]{klaus} 
that there exists a class function $\chi$ satisfying these properties (this class function was first introduced by M. Jackson in an unpublished work \cite[Proposition 20]{jackson2}). 
 
We apply this method to the class function $\chi$ introduced by Jackson and obtain a compatible family of representations $\bV_{\chi}$. Using this family, we construct a $G$-vector bundle $E\to \bbS(V)$ with fiber type $\bV_{\chi}$. Once this $G$-vector bundle is constructed, we take Whitney sum multiples of this $G$-vector bundle and apply some smoothing techniques to obtain a smooth $G$-action on a product of two spheres $M=\bbS(V)\times \bbS ^m$ with rank one isotropy. Finally we apply \cite[Theorem 6.7]{unlu-yalcin2} to $M$ and obtain a free smooth $G$-action on a product of three spheres $\bbS(V) \times \bbS^m \times \bbS^k$ for some $m,k\geq 1$.  
 
The key step in this construction is the construction of a $G$-vector bundle over $\bbS(V)$ with fiber type $\bV_{\chi}$. For this step we use a generalization of the L\" uck-Oliver theorem  on constructions of $G$-vector bundles
(see Theorem \ref{thm:G-bundleconst}). The main assumption of the L\" uck-Oliver  theorem is that the given compatible family of representations factors through a finite group $\Gamma$ (see Definition \ref{def:factors through G}).  However, we were not able to find such a finite group $\Gamma$ for the family $\bV_{\chi}$. 

On the other hand, it is possible to find a collection of subfamilies $\{ \cH_d \}$ which covers $\cH$ such that the restriction of $\bV_{\chi}$ to $\cH_d$ factors through a finite group $\Gamma _d$. So we prove a theorem (Theorem \ref{thm:G-bundleconst}) which has the same conclusion as the L\" uck-Oliver theorem but it works under a weaker assumption that the given compatible family of representations factors through a diagram of finite subgroups satisfying certain connectedness properties. Using this theorem, we are able to do the $G$-vector bundle construction for the family $V_{\chi}$  and complete the proof of Theorem \ref{thm:main}.

The paper is organized as follows. In Section \ref{sect:G-bundles} we introduce necessary definitions and state the L\" uck-Oliver theorem mentioned above. Section \ref{sect:proof of Thm 2.10} is devoted to the proof Theorem \ref{thm:G-bundleconst} which is a generalization of the L\" uck-Oliver theorem. In Section \ref{sect:FreeActions}, we prove some consequences of Theorem \ref{thm:G-bundleconst}. In Section \ref{sect:RankThreeConst}, we  prove Theorem \ref{thm:main} using the strategy described above.

{\bf Acknowledgements:} This paper was completed when the author was visiting McMaster University during Fall 2014. The author thanks Ian Hambleton and McMaster University for hospitality and financial support. This research is also supported by the Scientific and Technological Research Council of Turkey (T\" UB\. ITAK) through the research program B\. IDEB-2219. I thank the referee for careful reading of the paper and for the corrections.

\section{Constructing $G$-vector bundles}\label{sect:G-bundles}

Let $G$ be a finite group and $X$ be a $G$-CW-complex. A $G$-vector bundle over $X$ is a vector bundle $p: E \to X$ such that $p$ is a $G$-map and $G$ acts on $E$ via bundle isomorphisms. Note that for each $x\in X$, there is an action of isotropy subgroup $G_x$ on the fiber space $V_x=p^{-1}(x)$ which is a vector space and the action of $G_x$ on $V_x$ is linear. 

Let $\cH$ be a family of subgroups of $G$. \emph{Throughout the paper ``a family of subgroups" always means that it is a set of subgroups of $G$ which is closed under conjugation and taking subgroups}. Let $\bV=\{ V_H\}_{H \in \cH}$ be a collection of $H$-representations over the family $\cH$. We say that the $G$-vector bundle $p:E\to X$ has  \emph{fiber type} $\bV$ if for every $x\in X$, the isotropy subgroup $G_x$ lies in the family $\cH$ and there is an isomorphism  of $G_x$-representations $V_x \cong V_{G_x}$. Note that the collection of representations $\{V_H\}$ arising as fibers of a $G$-vector bundle satisfies the following compatibility condition.

\begin{definition}\label{def:compatible family} Let $G$ be a finite group and $\cH$ be a family of subgroups of $G$. A collection of representations $\bV=(V_H)_{H \in \cH}$ is called a {\it compatible family} if for every map $c_g: H\to K$ defined by $c_g (h)=ghg^{-1}$, where $g \in G$ and $H, K\in \cH$, there is a $H$-vector space isomorphism $V_H \cong(c_g)^* (V_K)$.
\end{definition}

In \cite{lueck-oliver}, L\" uck and Oliver consider the question of constructing a $G$-vector bundle $q:E \to X$ over a given finite dimensional $G$-CW-complex $X$, such that the fiber type of $q$ is the given compatible family $\bV$. They observe that in general these $G$-vector bundles may not exist, but they also proved that if $\bV$ factors through a finite group, then one can construct a $G$-vector bundle over $X$ with fiber type $\bV ^{\oplus k}$ for some positive integer $k$ (see \cite[Theorem 2.6]{lueck-oliver}). This theorem is the main tool for constructing smooth actions on products of spheres given in \cite{unlu-yalcin2}. Before we state this theorem, we first introduce some necessary definitions.  
 
Let $\Gamma$ be a compact Lie group. A $G$-equivariant principal $\Gamma$-bundle over a $G$-CW-complex $X$ is a principal $\Gamma$-bundle $p:E\to X$ 
such that $p$ is a $G$-map between left $G$-spaces
and the left $G$-action on $E$ commutes with the right $\Gamma$-action.
Note that as in the $G$-vector bundle case, for each $x\in X$, there is a $G_x$-action on the fiber space $p^{-1} (x)$. The fiber space $p^{-1}(x)$ is a free $\Gamma$-orbit $e\cdot \Gamma$ for some $e\in E$ such that $p(e)=x$.  This gives a homomorphism $\alpha _{G_x} : G_x \to \Gamma$ defined by $\alpha _{G_x} (h)=\gamma $ for $h\in G_x$, where $\gamma \in \Gamma$ is the unique element in $\Gamma$ such that $he=e\gamma$. Note that this homomorphism is well-defined up to a choice of the element $e\in p^{-1} (x)$, so it defines 
an element in $\Rep (G_x , \Gamma):=\Hom (H, \Gamma )/\Inn(\Gamma)$ where $\Inn(\Gamma)$ denotes the group of conjugation actions of $\Gamma$ on itself.

\begin{definition} Let $G$ be a finite group and $\cH$ be a family of subgroups of $G$. A collection of representations $\bA =(\alpha _H : H \to \Gamma)_{H\in \cH}$ over $\cH$ is called a {\it compatible family} if for every map $c_g: H \to K$ induced by conjugation $c_g(h)=ghg^{-1}$, where $g\in G$ and $H,K\in \cH$, there exists a $\gamma\in \Gamma$ such that the following diagram commutes:
$$\xymatrix{
H \ar[d]^{c_g} \ar[r]^{\alpha _H}
& \Gamma \ar[d]^{ c_{\gamma }}  \\
K  \ar[r]^{\alpha _K}
&  \Gamma}\\ $$
This is equivalent to saying that $\bA =(\alpha _H ) _{H \in \cH} $ is an element of the limit $$\lim _{G/H \in \Or _{\cH} G} \Rep (H , \Gamma )$$
where $\Or _{\cH } G$ denotes the orbit category of $G$ over the family $\cH$. Recall that the orbit category $\Or _{\cH} G$ is the category whose objects are transitive $G$-sets $G/H$ with $H \in \cH$ and whose morphisms are given by $G$-maps $\Map _G (G/H, G/K)$.
\end{definition}

\begin{definition}\label{def:factors through G} Let $\bV$ be a compatible family of unitary representations over a family of subgroups $\cH$. We say that $\bV$ \emph{factors through a finite group} $\Gamma$ if there exists a triple $(\Gamma, \rho, \bA)$, where $\Gamma$ is a finite group, $\rho : \Gamma\to U(n)$ is a unitary representation of $\Gamma$, and  $\bA=(\alpha _H: H \to \Gamma)_{H\in \cH}$ is a compatible family of representations, such that $\bV=\rho \circ \bA$.  
\end{definition}

Now we state the L\"uck-Oliver theorem mentioned in the introduction.

\begin{theorem}[see Theorem 2.6 in \cite{lueck-oliver}]\label{thm:lueck-oliver} Let $G$ be a finite group and $\cH$ be a family of subgroups in $G$. Let $X$ be a finite dimensional $G$-CW-complex with isotropy subgroups in $\cH$. Suppose that we are given  a compatible family $\bV$ of unitary representations  over $\cH$ and that $\bV$ factors through a finite group $\Gamma$. Then there is an integer $k\geq 1$ such that there exists a $G$-vector bundle $E \to X$ with fiber type $\bV ^{\oplus k}$.
\end{theorem}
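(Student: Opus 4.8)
The plan is to reduce the construction of the vector bundle to the construction of a $G$-equivariant principal $\Gamma$-bundle and then to push it forward along $\rho$. Write $\bV=\rho\circ\bA$ with $\bA=(\alpha_H:H\to\Gamma)_{H\in\cH}$ as in Definition~\ref{def:factors through G}. If one can produce a $G$-equivariant principal $\Gamma$-bundle $P\to X$ whose fiberwise homomorphisms $\alpha_{G_x}:G_x\to\Gamma$ agree, in $\Rep(G_x,\Gamma)$, with the restriction of $\bA$, then the associated bundle $E:=P\times_\Gamma\bbC^n$, where $\Gamma$ acts on $\bbC^n$ through $\rho$, is a $G$-vector bundle over $X$ with fiber type $\bV$, since its fiber over $x$ is $\bbC^n$ with $G_x$ acting by $\rho\circ\alpha_{G_x}=V_{G_x}$. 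The decisive gain from passing to the finite structure group $\Gamma$ is that the classifying data for such bundles has only finite homotopy: over an orbit $G/H$ the relevant space of bundle structures is $\Map(BH,B\Gamma)$, whose components are indexed by $\Rep(H,\Gamma)$ and each of which is homotopy equivalent to $BC_\Gamma(\alpha_H(H))$, a classifying space of a finite group with finite $\pi_1$ and vanishing higher homotopy.

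Next I would set up the extension problem cell by cell over a $G$-CW structure of $X$, using that all isotropy groups lie in $\cH$. Because $\bA$ is an element of $\lim_{G/H\in\Or_\cH G}\Rep(H,\Gamma)$, it provides coherent bundle data on the $0$-skeleton: over each orbit one takes the induced bundle $G\times_{\alpha_H}\Gamma$, and the limit condition guarantees that these glue along the morphisms of the orbit category. The obstructions to extending the principal $\Gamma$-bundle over the higher skeleta then live in equivariant (Bredon) cohomology groups $H^{*}_G(X;\underline{\pi})$, whose coefficient systems $\underline{\pi}$ are assembled from the (finite) homotopy groups of the mapping spaces above; equivalently they are detected by the higher derived limits $\lim^{i}_{\Or_\cH G}$ of coefficient functors built from the cohomology of the centralizers $C_\Gamma(\alpha_H(H))$. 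Since $\Gamma$ is finite, the orbit category $\Or_\cH G$ is finite, and $X$ is finite dimensional, all of these obstruction groups are finite abelian groups.

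The final and most delicate point is the appearance of the integer $k$. A single principal $\Gamma$-bundle realizing $\bA$ need not exist, as the total obstruction class $\omega\in H^{*}_G(X;\underline{\pi})$ may be nonzero. The remedy is that the fiber-type invariant is additive under Whitney sum, so that passing from $\bV$ to $\bV^{\oplus k}$ multiplies the associated obstruction class by $k$, the class being the image of $[\bV]$ under a homomorphism into the finite group $H^{*}_G(X;\underline{\pi})$. Choosing $k$ to be a common multiple of the exponents of these finite obstruction groups forces $k\cdot\omega=0$, so an extension exists and produces a $G$-vector bundle with fiber type $\bV^{\oplus k}$. I expect the main obstacle to be precisely the rigorous setup of this obstruction theory for equivariant principal bundles, namely verifying that the obstruction to realizing a compatible family is (i) valued in \emph{finite} groups---this is exactly where the hypothesis that $\bV$ factors through the finite group $\Gamma$ is indispensable, since the analogous groups for the structure group $U(n)$ are infinite and the bundle genuinely need not exist---and (ii) additive under $\oplus$, so that the multiplication-by-$k$ argument applies. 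Establishing this additive, finite obstruction theory is the technical heart of the argument.
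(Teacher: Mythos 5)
Your overall strategy---pass to $G$-equivariant principal $\Gamma$-bundles, exploit the finiteness of $\Gamma$ through the centralizers $C_\Gamma(\alpha_H(H))$, and kill obstructions by Whitney sums---is the same as the L\"uck--Oliver argument that the paper cites and adapts in Section \ref{sect:proof of Thm 2.10}. But the way you connect the two halves has a genuine gap. You propose to build a principal $\Gamma$-bundle $P\to X$ realizing $\bA$, set $E=P\times_\Gamma\bbC^n$, and, when the obstruction to $P$ is nonzero, repair this by replacing $\bV$ with $\bV^{\oplus k}$. This cannot work as stated: replacing $\bV$ by $\bV^{\oplus k}$ only replaces $\rho$ by $\rho^{\oplus k}$; the group $\Gamma$, the family $\bA$, and hence the entire lifting problem for the principal $\Gamma$-bundle are unchanged, so its obstruction is not multiplied by $k$---it is literally the same class. (And a principal $\Gamma$-bundle realizing $\bA$ over $X$ genuinely need not exist; if it did, $k=1$ would always suffice, which is false in general.) Relatedly, your claim that the obstruction groups are finite because the coefficient systems are finite is misplaced: once you move to the vector-bundle side, where Whitney sum actually acts, the coefficients are $\pi_n(B_{\cH}(G,\bV^{\oplus k})^H)$, i.e.\ homotopy groups of classifying spaces of centralizers in $U(kn)$; these contain copies of $\bbZ$ and are not finite.

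The actual mechanism is different. One never constructs $P$ over $X$. Instead one forms $Y=B_{\cH}(G,\bA)$, the classifying space of $G$-equivariant principal $\Gamma$-bundles with fiber type $\bA$, together with the map $f_k=w_k\circ B\rho\colon Y\to B_{\cH}(G,\bV^{\oplus k})$, and extends $f_k$ over the skeleta of the mapping cylinder $Z$ of $Y\to E_{\cH}G$; composing with a $G$-map $X\to E_{\cH}G^{(n)}$ and pulling back the universal bundle then produces $E\to X$. The obstructions lie in $H^{n+1}_G\bigl(Z,Y;\pi_n(B_{\cH}(G,\bV^{\oplus k})^?)\bigr)$, and the finiteness of $\Gamma$ enters not through the coefficients but through the pair: $Z^H\simeq *$ while $Y^H\simeq BC_\Gamma(\alpha_H(H))$ has reduced homology of finite exponent, so a universal-coefficient (hyper-cohomology) spectral sequence shows these Bredon groups have finite exponent for \emph{arbitrary} coefficients. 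Only at that point does your multiplication-by-$k$ step apply, via the fact that $w_m$ induces multiplication by $m$ on the relevant homotopy groups of the target. In short, the finiteness you need is a statement about $H_*(Z^?,Y^?)$, not about the obstruction coefficients, and the Whitney-sum trick operates on the classifying map into $B_{\cH}(G,\bV^{\oplus k})$, not on the principal-bundle problem.
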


We are interested in proving a generalization of Theorem \ref{thm:lueck-oliver}. We will show that the conclusion of this theorem still holds under the weaker assumption that $\bV$ factors through a diagram of finite groups instead of a single finite group $\Gamma$. We now introduce the necessary terminology to explain exactly what we mean by this.

Let $\cD$ be a finite poset considered as a category. Note that in $\cD$, there is a unique morphism between two objects $x, y \in \cD$ if and only if $x\leq y$.  Later we will assume that $\cD$ is a one-dimensional poset category. This means that if $x \leq y \leq z$ is a chain in $\cD$ then either $x=y$ or $y=z$. When $\cD$ is one-dimensional, the set of objects in $\cD$ can be written as a disjoint union ${\rm obj} (\cD)=D_1\amalg D_2$ where  if $x< y $ in $\cD$ then $x\in D_1$ and $y\in D_2$. Here $x<y$ means that $x\leq y$ but $x\neq y$. Sometimes these posets are called bipartite posets.

\begin{definition}\label{def:diagram of groups} Let $\cD$ be a finite poset category. 
\begin{enumerate}
\item
A {\it diagram of groups} $\Gamma _{*}$ over $\cD$ is a functor from $\cD$ to the category of groups. We denote the group associated to $d \in \cD$ by $\Gamma _d$ and for each $x\leq y$, the corresponding group homomorphism is denoted by $\mu _{x,y}\colon \Gamma _x \to \Gamma _y$.  We say $\Gamma _*$ is a {\it diagram of finite groups} if for all $d\in \cD$, the groups $\Gamma_d$ are finite.

\item Let $n$ be a fixed positive integer. 
A {\it diagram of  representations} of $\Gamma_*$ of degree $n$ is a collection of homomorphisms
$\rho_d \colon \Gamma _d\to U(n)$, one for each $d\in \cD$, such that for every $x, y$ in $\cD$ with $x\leq y$, the representations $\rho _x$ and $\rho _y \circ \mu _{x,y}$ are isomorphic. 

\item Let $\cH$ be a family of subgroups of $G$ and $\{\cH_d\}_{d\in \cD}$ be a collection of subfamilies of $\cH$ (for each $d\in \cD$, $\cH_d$ is closed under conjugation and taking subgroups). If for every $x\leq y$ in $\cD$, $\cH_x \subseteq \cH_y$, then we call $\{\cH_d \} _{d\in \cD}$ a \emph{diagram of subfamilies of $\cH$} over $\cD$ and denote it by $\cH _*$. A diagram of subfamilies $\cH_*$ can also be thought as a functor from $\cD$ to the poset of subfamilies of $\cH$.
\end{enumerate}
\end{definition}

\begin{remark} In our applications, the maps $\mu _{x,y}: \Gamma _x \to \Gamma _y$ are always injective, but we do not assume this in the definition of a diagram of groups. In particular, Theorem \ref{thm:G-bundleconst} and Theorem \ref{thm:AlmostConnected} hold for the maps $\mu_{x,y}$ which are not necessarily injective.
\end{remark}

We do not assume that the subfamilies $\cH_d$ cover $\cH$ in the definition but we have a connectedness assumption which implies that $\cup _{d\in \cD} \cH_d =\cH$.

\begin{definition}\label{def:connectedness}
Let $\cD$ be a one-dimensional poset category and $\cH_*$ be a diagram of subfamilies of $\cH$ over $\cD$. For each $H \in \cH$, let $\cD_H$ denote the full subposet $\{ d\in \cD \, | \, H \in \cH_d\}$. We say $\cH_*$ is {\it strongly connected} if for every $H \in \cH$, the realization of $\cD _H$ is simply connected (i.e., non-empty, connected, and having trivial fundamental group).  
\end{definition}

Next, we define what we mean by a diagram of compatible family of representations:

\begin{definition}\label{def:compatible over d} Let $\cH_*$ be a diagram of subfamilies and $\Gamma _*$ be a diagram of groups over a finite poset $\cD$. Suppose that for each $d\in \cD$, we are given a compatible family of representations
$$\bA _d=\{\alpha _H ^d: H \to \Gamma _d \mid H \in \cH_d \}.$$ 
We say $\bA_*=(\bA _d)_{d\in \cD}$ is a {\it diagram of compatible families of representations} if it satisfies the condition that for every $x\leq y $ in $\cD$, the restriction of $\bA _y$ to $\cH _x$ is equal to $\mu _{x,y}\circ \bA _x$. We write this condition as $\bA _y |_{\cH _x}=\mu_{x,y} \circ \bA _x$ for all $x\leq y$.
\end{definition}

\begin{remark} Note that another way to define this compatibility condition is to require that for every map $c_g: H \to K$ induced by conjugation $c_g(h)=ghg^{-1}$, where $g\in G$, and for every $x\leq y$ in $\cD$ such that $H\in \cH_x$ and $K \in \cH_y$,  there exists a $\gamma\in \Gamma _y$ such that the following diagram commutes:
$$\xymatrix{
H \ar[d]^{c_g} \ar[r]^{\alpha ^{x} _H}
& \Gamma_x \ar[d]^{ c_{\gamma }\circ \mu _{x, y}}  \\
K  \ar[r]^{\alpha ^y _K}
&  \Gamma _y}\\ $$
If we take $x=y=d$ in the above diagram, we obtain that the family $\bA_d=(\alpha ^d _H ) _{H\in \cH _d}$ is a compatible family of representations $\alpha _H : H \to \Gamma _d$ over $\cH_d$ in the usual sense. If we take $x< y$ in $\cD$, the commutativity of the diagram above is equivalent to the condition $\bA _y |_{\cH _x}=\mu_{x,y} \circ \bA _x$.
\end{remark}

Now we explain what we mean when we say a family of representations factors through a diagram of finite groups.
 
\begin{definition}\label{def:factors through G_*}
Let $\bV=(V_H)_{H\in \cH}$ be a compatible family of unitary representations over a family of subgroups $\cH$. We say that $\bV$ {\it factors through a diagram of finite groups} $\Gamma _*$ if there exists a quadruple $(\Gamma _*, \rho_*, \cH_*, \bA _*)$, where
\begin{enumerate}
\item $\Gamma_*$ is a diagram of finite groups over a finite poset category $\cD$, 
\item $\rho_*$ is a representation of $\Gamma _*$, 
\item $\cH_*$ is a diagram of subfamilies over $\cD$, and  
\item $\bA_*=(\bA _d)_{d\in \cD}$ is a diagram of compatible families of representations defined over $\cH_*$, 
\end{enumerate}
such that for each $d\in \cD$, the equality $\bV|_{\cH _d}=\rho_d \circ \bA_d$ holds. \end{definition}

Finally we define the main assumption in our theorems.

\begin{definition}\label{def:strongly connected G_*}
Let $\bV=(V_H)_{H\in \cH}$ be a compatible family of unitary representations over a family of subgroups $\cH$. Suppose that $\bV$ factors through a diagram of finite groups $\Gamma _*$ over a one-dimensional diagram $\cD$. If $\cH_*$ is strongly connected, then we say $\bV$ factors through a \emph{strongly connected one-dimensional diagram of finite groups $\Gamma _*$}.  
\end{definition}

\section{A generalization of the L\" uck-Oliver theorem}\label{sect:proof of Thm 2.10}

The main aim of this section is to prove the following theorem.

\begin{theorem}\label{thm:G-bundleconst} Let $G$ be a finite group, $\cH$ be a family of subgroups of $G$, and $X$ be a finite dimensional $G$-CW-complex with isotropy subgroups in $\cH$. 

Suppose that we are given a compatible family $\bV$ of unitary representations over $\cH$ which factors through a strongly connected one-dimensional diagram of finite groups $\Gamma _*$. 

Then, there is a positive integer $k$ such that there exists a $G$-vector bundle $E \to X$ with fiber type $\bV ^{\oplus k}$.
\end{theorem}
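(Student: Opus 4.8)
The plan is to reduce to the single-group theorem (Theorem \ref{thm:lueck-oliver}), applied over the pieces of a decomposition of $X$ indexed by $\cD$, and then to assemble the resulting bundles. For each $d\in\cD$ let $X_d\subseteq X$ be the $G$-invariant subspace of points whose isotropy group lies in $\cH_d$; since $\cH_x\subseteq\cH_y$ for $x\leq y$, the assignment $d\mapsto X_d$ is a functor from $\cD$ to $G$-spaces, and there is a natural $G$-map $\epsilon\colon \hocolim_{\cD} X_d \to X$. I would first check that $\epsilon$ is a $G$-homotopy equivalence. Working one orbit type at a time, over the stratum of points with isotropy exactly $(H)$ the indices $d$ with $x\in X_d$ are precisely those in $\cD_H$, so the homotopy fiber of $\epsilon$ along this stratum is the realization of $\cD_H$. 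By the strong connectedness hypothesis each such realization is simply connected, and a simply connected graph (recall $\cD$ is one-dimensional) is a tree, hence contractible; an induction over orbit types together with equivariant Whitehead then shows $\epsilon$ is a $G$-homotopy equivalence. This reduces the problem to constructing a $G$-vector bundle of fiber type $\bV^{\oplus k}$ over $\hocolim_{\cD} X_d$.

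Over each individual $X_d$, the hypotheses of Theorem \ref{thm:lueck-oliver} are met: $X_d$ is a finite-dimensional $G$-CW-complex with isotropy in $\cH_d$, and $\bV|_{\cH_d}=\rho_d\circ\bA_d$ factors through the single finite group $\Gamma_d$. Hence there is an integer $k_d\geq 1$ and a $G$-vector bundle $E_d\to X_d$ of fiber type $(\bV|_{\cH_d})^{\oplus k_d}$. Replacing $E_d$ by a Whitney power and setting $k=\mathrm{lcm}_d(k_d)$, I may assume all $E_d$ have fiber type $(\bV|_{\cH_d})^{\oplus k}$ with a common $k$.

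To assemble the $E_d$ into a bundle over $\hocolim_{\cD} X_d$ I would use the key feature of a one-dimensional $\cD$: since no chain $x\leq y\leq z$ is nondegenerate, a $G$-vector bundle on the homotopy colimit is specified by the bundles $E_d$ together with a single layer of gluing isomorphisms $\phi_{x,y}\colon E_x\xrightarrow{\cong}(E_y)|_{X_x}$ over the inclusions $x\leq y$, with \emph{no} cocycle condition to verify. So it suffices to produce such isomorphisms. On $X_x$ both $E_x$ and $(E_y)|_{X_x}$ have fiber type $(\bV|_{\cH_x})^{\oplus k}$: indeed $\rho_x\circ\bA_x=(\rho_y\circ\mu_{x,y})\circ\bA_x=\rho_y\circ(\bA_y|_{\cH_x})$ by the compatibility $\bA_y|_{\cH_x}=\mu_{x,y}\circ\bA_x$ and the isomorphism $\rho_x\cong\rho_y\circ\mu_{x,y}$. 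Revisiting the obstruction-theoretic proof of Theorem \ref{thm:lueck-oliver}, two $G$-vector bundles of the same fiber type become $G$-isomorphic after a common Whitney stabilization, because the relevant equivariant coefficient systems are built from the finite centralizer groups of the images $\alpha_H(H)$ in $\Gamma$, hence are finite and are annihilated by passing to a suitable multiple. Performing this stabilization uniformly over the finitely many edges of $\cD$ (enlarging $k$ once more) produces the $\phi_{x,y}$ and hence a $G$-vector bundle over $\hocolim_{\cD} X_d$; transporting it along $\epsilon$ gives the desired bundle $E\to X$ of fiber type $\bV^{\oplus k}$.

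The main obstacle is the assembly step, and it is instructive that the two hypotheses on $\cD$ do independent work there. One-dimensionality is what removes all higher coherence, so that the gluing data reduces to unconditioned edge isomorphisms; this is why a bipartite $\cD$ suffices and no secondary cocycle obstruction appears. Strong connectedness (simple connectivity of each $|\cD_H|$) is what makes $\epsilon$ an equivalence, so that solving the problem over the homotopy colimit is the same as solving it over $X$. The genuinely technical points that remain are: making the subspaces $X_d$ into honest $G$-CW-subcomplexes (or replacing them by $G$-homotopy equivalent models) so that the homotopy colimit and the equivalence $\epsilon$ are valid; and extracting from the L\"uck--Oliver argument the precise uniqueness-after-stabilization statement, i.e.\ that the obstruction to an isomorphism of same-fiber-type bundles lives in a finite group killed by the $\oplus k$ operation. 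Once these are in hand, the construction and its output fiber type $\bV^{\oplus k}$ follow as above.
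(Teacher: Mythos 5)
Your architecture is genuinely different from the paper's: the paper never decomposes $X$; it glues the classifying spaces $B_{\cH_d}(G,\bA_d)$ of equivariant principal $\Gamma_d$-bundles into a homotopy colimit $Y$ (where the gluing maps $B\mu_{x,y}$ are canonically supplied by the homomorphisms $\mu_{x,y}$ and the compatibility $\bA_y|_{\cH_x}=\mu_{x,y}\circ\bA_x$, so no stabilization is needed to assemble $Y$), and then runs a single obstruction argument over the pair $(Z,Y)$, $Z$ the mapping cylinder of $Y\to E_{\cH}G$. The first half of your plan is sound once repaired as you anticipate: taking $X_d=X\times E_{\cH_d}G$ gives an honest diagram of finite-dimensional $G$-CW-complexes, and $(\hocolim_d E_{\cH_d}G)^H\simeq|\cD_H|$ is contractible by strong connectedness, so $\epsilon$ is a $G$-homotopy equivalence. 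The genuine gap is the assembly step. The statement you invoke --- that two $G$-vector bundles of the same fiber type become $G$-isomorphic after a common Whitney stabilization --- is false in general: already non-equivariantly, a line bundle $L$ on $\bbS^2$ with $c_1(L)\neq 0$ satisfies $c_1(L^{\oplus m})=mc_1(L)\neq 0$, so no Whitney power of $L\oplus\bbC^{k-1}$ is ever isomorphic to the trivial bundle of the same rank. The justification you offer ("the relevant equivariant coefficient systems are built from the finite centralizer groups\dots hence are finite") conflates two different sources of finiteness: the coefficient systems in the obstruction groups are $\pi_n(B_{\cH}(G,\bV^{\oplus k})^?)$, which are built from homotopy groups of unitary groups and contain copies of $\bbZ$; the finite exponent in the L\"uck--Oliver argument comes from the relative homology $H_*(Z^H,Y^H)$ of the \emph{source} pair, where $Y^H$ is homotopy equivalent to the classifying space of a finite group (\cite[Lemma 2.4]{lueck-oliver}).

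What is actually true, and what your gluing step needs, is a \emph{relative} uniqueness statement: the classifying maps of $E_x$ and of $E_y|_{X_x}$ both factor through $E_{\cH_x}G^{(n)}$ and restrict, up to homotopy, to the same map $w_k\circ B\rho_x$ on $B_{\cH_x}(G,\bA_x)$ (this is where $\rho_x\cong\rho_y\circ\mu_{x,y}$ and $\bA_y|_{\cH_x}=\mu_{x,y}\circ\bA_x$ enter), so the difference obstructions lie in $H^{*}_G(Z_x,Y_x;\pi_*(B_{\cH}(G,\bV^{\oplus k})^?))$, which have finite exponent and are multiplied by $j$ under the Whitney sum map $w_j$. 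Proving that, edge by edge, is the same equivariant obstruction theory that constitutes the proof of Theorem \ref{thm:lueck-oliver}; you cannot use that theorem as a black box but must reopen its proof, at which point doing the argument once over $\hocolim_d B_{\cH_d}(G,\bA_d)$ (as the paper does) is both simpler and avoids the coherence bookkeeping over the edges of $\cD$. As written, the key gluing step is unjustified and the reason given for it is incorrect.
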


The proof is obtained by modifying the proof of \cite[Theorem 2.6]{lueck-oliver}. We will use the notation introduced in \cite[Section 2]{lueck-oliver}. In particular, throughout $B _{\cH} (G, \bV)$ denotes the classifying space of $G$-vector bundles with fiber type $\bV$. Similarly, for each $d\in \cD$, $B_{\cH _d} (G, \bA _d)$ denotes the classifying space of $G$-equivariant principal $\Gamma _d$-bundles with fiber type $\bA _d$. For each $d\in \cD$, we can use the representation $\rho _d: \Gamma _d \to U(n)$ to convert a $G$-equivariant principal $\Gamma_d$-bundle $q \colon E \to X$ to a $G$-vector bundle $\widetilde q: E \times _{\Gamma _d} V \to X$ where $V$ denotes $\Gamma _d$-vector space defined by the representation $\rho _d$. Applying this construction to the universal principal $\Gamma _d$-bundle over $B_{\cH_d} (G, \bA _d )$, we get a map $$B\rho_d\colon B_{\cH _d} (G, \bA _d )\to B_{\cH } (G, \bV)$$ for each $d\in \cD$ as the classifying map of the $G$-vector bundle obtained by the above construction. 

A similar argument can be used to show that for every non-identity map $x \to y$ in $\cD$, there is a map $B\mu _{x,y} : B_{\cH _x} (G, \bA _x) \to B_{\cH _y } (G, \bA _y )$ defined by converting the universal $G$-equivariant principal $\Gamma _x$-bundle to a $\Gamma _y$-bundle via the homomorphism $\mu _{x,y} : \Gamma _x \to \Gamma_y$. For this to work one needs the equality $\bA _y |_{\cH _x} =\mu _{x,y}\circ \bA _x$ to hold which we have by the compatibility assumption on $(\bA _d )_{d\in \cD}$ described in Definition \ref{def:compatible over d}. Note that since $\cD$ is a one-dimensional category, the assignment $d\to B_{\cH _d } (G, \bA _d)$ together with the assignment $\mu _{x,y}\to B\mu _{x,y}$ defines a functor $F$ from $\cD$ to the category of topological spaces. 

Let $Y:=\hocolim_{\cD} F$ denote the homotopy colimit of the functor $F : \cD \to {\rm Top}$ (see \cite[Section 4.5]{benson-smith} for more details on homotopy colimits). Since $\cD$ is a one-dimensional category, $Y$ can be described as the identification space $$\hocolim _{\cD} F=\Bigl \{ \Bigl ( \coprod _{d\in \cD} B_{\cH _d } (G, \bA _d) \Bigr )\amalg \Bigl ( \coprod _{x< y } B_{\cH _x } (G, \bA _x)\times [0,1] \Bigr ) \Bigr \} \Big /\sim$$ 
where $B_{\cH _x} (G, \bA _x)\times \{ 0\} $ is identified with $B_{\cH_x} (G, \bA _x)$ via the identity map, and on the other end $B_{\cH _x} (G, \bA _x)\times \{ 1\} $ is identified with $B_{\cH _y} (G, \bA _y)$ via the map $B\mu _{x,y}$. 

For every $H \in \cH$, the fixed point set $Y^H$ is nonempty if and only if $H \in \cH_d$ for some $d\in \cD$. Since $\cH_*$ is strongly closed, we have $\cup _{d\in \cD} \cH_d=\cH$, hence we can conclude that for every $H \in \cH$, we have $Y^H \neq \emptyset$. We also have the following:

\begin{lemma}\label{lem:cohcalc} For every $H \in \cH$, the reduced homology group $\widetilde H_j (Y^H)$ has finite exponent for all $j$.  
\end{lemma}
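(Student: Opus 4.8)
The plan is to compute $\widetilde H_*(Y^H)$ by taking the $H$-fixed points of the homotopy colimit description of $Y$ and identifying the result as itself a homotopy colimit over the subposet $\cD_H$. Since taking $H$-fixed points is a right adjoint on the relevant spaces but commutes with the specific one-dimensional (bipartite) homotopy colimit used here because that $\hocolim$ is built out of mapping cylinders and disjoint unions, I would first argue that
$$Y^H \simeq \hocolim_{\cD_H} F^H,$$
where $F^H(d) = B_{\cH_d}(G,\bA_d)^H$. Here $\cD_H = \{d \in \cD \mid H \in \cH_d\}$ is exactly the subposet controlling where $H$ appears, and $B_{\cH_d}(G,\bA_d)^H$ is empty precisely when $H \notin \cH_d$, so only the objects of $\cD_H$ contribute. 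The strong connectedness hypothesis (Definition \ref{def:connectedness}) guarantees $\cD_H$ is simply connected, which is what will make the Mayer--Vietoris / spectral sequence bookkeeping tractable.

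Next I would invoke the corresponding computation from the L\"uck--Oliver setting: for a single finite group $\Gamma_d$, the fixed-point space $B_{\cH_d}(G,\bA_d)^H$ is (up to homotopy) a classifying space built from centralizer data of the homomorphism $\alpha_H^d \colon H \to \Gamma_d$, and its reduced homology is known to have finite exponent. This is precisely the kind of finite-exponent statement established in \cite{lueck-oliver} for the fixed points of the classifying spaces $B_{\cH}(G,\bV)$ and $B_{\cH}(G,\bA)$; I would cite the analog of their homology calculation (their Lemma/Proposition giving $\widetilde H_*$ of the fixed points has finite exponent) and apply it to each $F^H(d)$ for $d \in \cD_H$. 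Thus each term $\widetilde H_j(F^H(d))$ has finite exponent.

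The main tool is then the homotopy colimit (Bousfield--Kan) spectral sequence
$$E^2_{s,t} = \varprojlim{}^{-s}_{\cD_H} \widetilde H_t(F^H(d)) \Longrightarrow \widetilde H_{s+t}(Y^H),$$
or more elementarily, since $\cD$ is one-dimensional, the Mayer--Vietoris sequence associated to the bipartite decomposition $\mathrm{obj}(\cD_H) = (D_1 \cap \cD_H) \amalg (D_2 \cap \cD_H)$ together with the edges. Because $\cD_H$ is a one-dimensional poset, this spectral sequence (or long exact sequence) has only two columns, so there are no extension problems involving infinitely many terms: $\widetilde H_j(Y^H)$ receives contributions only from the $E^2_{0,j}$ (a subquotient of $\prod_{d} \widetilde H_j(F^H(d))$, hence of finite exponent) and $E^2_{-1,j+1}$ (a subquotient of a product of homology groups of the edge spaces, again of finite exponent by the same L\"uck--Oliver input applied to the $\cH_x$ terms). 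Since both columns have finite exponent and there are finitely many objects and morphisms in $\cD$ (hence in $\cD_H$), each $\widetilde H_j(Y^H)$ is an extension of two finite-exponent groups and therefore has finite exponent.

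The hard part will be establishing the first step cleanly, namely that forming $H$-fixed points commutes with this particular homotopy colimit so that $Y^H \simeq \hocolim_{\cD_H} F^H$; one must check that the identification maps $B\mu_{x,y}$ restrict correctly on fixed points and that the mapping-cylinder gluing is preserved, using that $\cH_x \subseteq \cH_y$ ensures $H \in \cH_x \Rightarrow H \in \cH_y$ so that edges of $\cD_H$ are exactly the edges of $\cD$ between objects of $\cD_H$. Once this identification is in place, the finite-exponent conclusion follows formally from the two-column structure of the spectral sequence and the known finite-exponent calculation for the single-group fixed-point spaces.
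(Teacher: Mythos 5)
Your proposal follows essentially the same route as the paper: identify $Y^H$ with $\hocolim_{\cD_H} F^H$, use the L\"uck--Oliver identification $B_{\cH_d}(G,\bA_d)^H \simeq BC_{\Gamma_d}(\alpha_H^d)$ (classifying space of a finite group, hence finite-exponent reduced homology) on each vertex and edge space, and run the two-column Bousfield--Kan spectral sequence over the one-dimensional poset $\cD_H$ --- the paper phrases the last step by passing to rational coefficients and computing $H_*(Y^H;\bbQ)\cong H_*(|\cD_H|;\bbQ)$, which vanishes in positive degrees by strong connectedness. The one imprecision is your claim that \emph{both} columns have finite exponent: the $t=0$ row contributes $\colim_1 \bbZ \cong H_1(|\cD_H|;\bbZ)$, a free abelian group that is killed only by the simple connectivity of $\cD_H$, which you correctly flag as the essential hypothesis but should invoke explicitly at that point rather than appealing to finite exponent of the edge-space homology.
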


\begin{proof} Take $H \in \cH$. The fixed point subspace $Y^H$ is the homotopy colimit of the functor $$F^H : d \to B_{\cH_d} (G, \bA_d)^H.$$ The fixed point subspace $B_{\cH_d} (G, \bA_d)^H$ is nonempty if and only if $H \in \cH_d$. So the space $Y^H$ can be considered a homotopy colimit of the functor $F^H$ over the subposet $\cD_H$ generated by $\{ d \in \cD : H\in \cH_ d\}$.  It is shown in \cite[Lemma 2.4]{lueck-oliver} that for each $d\in \cD$, the fixed point space $B_{\cH_d}(G, \bA_d)^H$ is homotopy equivalent to the classifying space $BC_{\Gamma _{d}} (\alpha ^{d} _H)$ where $C_{\Gamma _{d}} (\alpha ^{d} _H)$ denotes the centralizer of $\alpha ^d _H (H) $ in $\Gamma _d$. Since $\Gamma _d$ is a finite group, the reduced homology group of $C_{\Gamma _{d}} (\alpha ^{d} _H)$ has finite exponent, hence $\widetilde H_t (B_{\cH_d}(G, \bA_d)^H)$ has finite exponent for all $d\in \cD$ and for all $t\geq 0$. 

To calculate the homology groups of $Y^H=\hocolim _{\cD _H } F^H$, we use the Bousfield-Kan homology spectral sequence (see \cite[Theorem 4.8.7]{benson-smith}). In this case, this spectral sequence takes the form $$E^2 _{s,t}=\colim _s H_t (B_{\cH_d} (G, \bA _d )^H) \Rightarrow H_{s+t} (Y^H )$$ where the colimit is over the category $\cD_H$. At this point it is useful to consider all the cohomology groups with coefficients in rational numbers. By the above observation for all $H \in \cH$, we have $H_t (B_{\cH_d} (G, \bA _d )^H, \bbQ )\cong H_t (pt , \bbQ)$ for all $t\geq 0$. So we obtain that  $$H_j (Y^H ; \bbQ ) \cong \colim _j H_0(pt, \bbQ) \cong  H_j (|\cD_H |; \bbQ)$$ for every $j \geq 0$, where $|\cD_H|$ denotes the realization of the poset $\cD_H$.   Since $\cD$ is one-dimensional and $\cH_*$ is strongly connected, for every $H \in \cH$, we have $\widetilde H_j (|\cD_H |; \bbZ)=0$ for all $j$. Hence the proof of the lemma is complete.
\end{proof}

Now we show how the proof of Theorem \ref{thm:G-bundleconst} can be completed using Lemma \ref{lem:cohcalc}. Note that for every $x\leq y$ in $\cD$, the representations $\rho _x$ and $\rho _y \circ \mu _{x,y}$ are isomorphic, hence the maps $B\rho _x$ and $B\rho _y \circ B\mu _{x,y}$ are homotopic. Using these homotopies we can extend the $G$-maps $B\rho _d: B_{\cH _d} (G, \bA _d)\to B_{\cH } (G, \bV)$ to a $G$-map $B\rho_* : Y \to B_{\cH } (G, \bV)$. 

The isotropy subgroups of $Y$ are in $\cH$, so there is also a $G$-map from $Y$ to the universal space $E_{\cH} G$ for the family $\cH$ (see \cite[Definition 2.1]{lueck-oliver}). Let us denote this map by $\beta :Y \to E_{\cH}G$. Let $Z$ denote the mapping cylinder of $\beta$. For every positive integer $k$, we have a $G$-map $f_k: Y \to  B_{\cH} (G, \bV ^{\oplus k} )$ obtained as the composition $$ f_k: Y \maprt{B\rho _*} B_{\cH } (G, \bV) \maprt{w_k} B_{\cH} (G, \bV ^{\oplus k} )$$ where the second map is the map induced by Whitney sum construction on $G$-vector bundles.

We want to show that for every positive integer $n$, there is a positive integer $k$ such that $f_k$ can be extended to a $G$-map $$\tilde f_k ^{(n)}: Z^{(n)}\cup Y \to B_{\cH} (G, \bV ^{\oplus k} ),$$ where $Z^{(n)}$ denotes the $n$-skeleton 
of $Z$. Observe that this finishes the proof of Theorem \ref{thm:G-bundleconst} because given a finite dimensional $G$-CW-complex $X$ with isotropy set $\cH$, there is a $G$-map from $X$ to $E_{\cH} G ^{(n)}$ for some $n$. Then composing this map with $\tilde f _k ^{(n)}$ we get a $G$-map $\tilde f_k ^X: X\to B_{\cH } (G, \bV ^{\oplus k} )$. The desired $G$-vector bundle over $X$ is the one obtained by pulling back the universal bundle over $B_{\cH } (G, \bV ^{\oplus k} )$ via $\tilde f_k ^X$. The details of this argument can be found in the proof of \cite[Theorem 2.6]{lueck-oliver}.

To show that for every $n\geq 0$, there is an integer $k$ such that $f_k$ can be extended to $\tilde f_k ^{(n)}: Z^{(n)}\cup Y \to B_{\cH} (G, \bV ^{\oplus k} )$, we   first observe that  $\tilde f_1 ^{(2)}$ exists since $B_{\cH } (G, \bV )^H $ is simply connected for all $H\in \cH$. Now assume that for some $n\geq 2$ there exists a $k\geq 1$ such that the map $f_k$ has been extended to $\tilde f^{(n)}_k$. We will show that by replacing $k$ with its multiple if necessary, we can extend $\tilde f^{(n)}_k$ to a map $\tilde f^{(n+1)} _{k}$ defined on $ Z^{(n+1)}\cup Y$. For this we use equivariant obstruction theory. 

Note that the obstructions for lifting $\tilde f_k ^{(n)}$ to $\tilde f_k ^{(n+1)} $ lies in the Bredon cohomology group $$H^{n+1} _G( Z, Y ; \pi _{n} (B_{\cH } (G, \bV ^{\oplus k} )^?)).$$ If these obstructions have finite exponent then they can be killed by taking further Whitney sums, i.e., by making $k$ bigger (see \cite[Theorem 2.6]{lueck-oliver} for details of this argument). So the proof is complete if we show that the above cohomology groups have finite exponent for all $n\geq 2$. Note that these cohomology groups are Bredon cohomology groups of the pair $(Z,Y)$ with coefficients in a local coefficient system, defined by $G/H \to \pi _{n} (B_{\cH } (G, \bV ^{\oplus k} )^H)$. Recall that a coefficient system over the family $\cH$ is a module over the orbit category $\Gamma_G:=\Or _{\cH} G$. So to complete the proof of  Theorem \ref{thm:G-bundleconst}, it is enough to prove the following proposition.

\begin{proposition}\label{pro:cohcalc}
Let $Z$ and $Y$ be as above and $M$ be an arbitrary $\bbZ \Gamma_G$-module. Then,
the Bredon cohomology group $H^{n+1} _G ( Z, Y ; M )$ has finite exponent for all $n\geq 2$.
\end{proposition}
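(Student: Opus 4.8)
The plan is to reduce the statement to the finite-exponent information already recorded in Lemma~\ref{lem:cohcalc} and then to transport it through a universal coefficient spectral sequence over the orbit category $\Gamma_G=\Or_{\cH}G$. First I would pass to the cellular Bredon chain complex $\underline{C}_*(Z,Y)$ of the $G$-CW-pair $(Z,Y)$. This is a chain complex of free $\bbZ\Gamma_G$-modules whose value at $G/H$ is the relative cellular chain complex $C_*(Z^H,Y^H)$, so its homology is the Bredon homology coefficient system $\underline{H}_q(Z,Y)\colon G/H\mapsto H_q(Z^H,Y^H)$. Because $Z$ is the mapping cylinder of $\beta\colon Y\to E_{\cH}G$, the fixed space $Z^H$ is the mapping cylinder of $\beta^H\colon Y^H\to (E_{\cH}G)^H$; as $(E_{\cH}G)^H$ is contractible for every $H\in\cH$, the space $Z^H$ is contractible. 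The long exact sequence of the pair $(Z^H,Y^H)$ then yields
$$H_q(Z^H,Y^H)\cong \widetilde H_{q-1}(Y^H)\quad\text{for all } q\geq 1,\qquad H_0(Z^H,Y^H)=0.$$

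Next I would feed in Lemma~\ref{lem:cohcalc}. In degree $q=1$ the group $\widetilde H_0(Y^H)$ is free abelian, and the lemma says it has finite exponent, so it must vanish; hence $\underline{H}_0(Z,Y)=\underline{H}_1(Z,Y)=0$. In degrees $q\geq 2$ the lemma gives that each $\widetilde H_{q-1}(Y^H)$ has finite exponent. Since $G$ is finite, $\Or_{\cH}G$ has only finitely many isomorphism classes of objects, so the least common multiple of these exponents furnishes an integer $e_q>0$ with $e_q\cdot\underline{H}_q(Z,Y)=0$.

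Finally I would invoke the universal coefficient (hypercohomology) spectral sequence attached to the complex of projectives $\underline{C}_*(Z,Y)$,
$$E_2^{p,q}=\Ext^p_{\bbZ\Gamma_G}\bigl(\underline{H}_q(Z,Y),\,M\bigr)\ \Longrightarrow\ H^{p+q}_G(Z,Y;M),$$
a first-quadrant spectral sequence since $p,q\geq 0$. If a $\bbZ\Gamma_G$-module $N$ satisfies $eN=0$ then $e\cdot\id_N=0$, and functoriality of $\Ext$ forces $e$ to annihilate $\Ext^p_{\bbZ\Gamma_G}(N,M)$ for every $p$ and every $M$. Consequently $E_2^{p,q}=0$ for $q\leq 1$, while $E_2^{p,q}$ is annihilated by $e_q$ for $q\geq 2$. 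For fixed $n\geq 2$ the group $H^{n+1}_G(Z,Y;M)$ carries a finite filtration whose subquotients are the terms $E_\infty^{p,q}$ with $p+q=n+1$; each is a subquotient of $E_2^{p,q}$, hence vanishes for $q\leq 1$ and has finite exponent for $q\geq 2$. As there are finitely many such $(p,q)$, a routine extension argument then shows $H^{n+1}_G(Z,Y;M)$ has finite exponent.

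The step I expect to require the most care is the vanishing of $\underline{H}_0(Z,Y)$ and $\underline{H}_1(Z,Y)$: the terms $E_2^{p,0}$ and $E_2^{p,1}$ contribute to $H^{n+1}_G(Z,Y;M)$, and there is no a priori reason for $\Ext^p$ of $\underline{H}_0$ or $\underline{H}_1$ to have finite exponent, so it is essential that these coefficient systems be trivial. This is exactly where the integral conclusion of Lemma~\ref{lem:cohcalc} (rather than a merely rational vanishing) is used, since $\widetilde H_0(Y^H)$ is free abelian and only a torsion constraint forces the fixed point sets $Y^H$ to be connected.
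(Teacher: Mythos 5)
Your proof is correct and follows essentially the same route as the paper: the hypercohomology spectral sequence $E_2^{p,q}=\Ext^p_{\bbZ\Gamma_G}(H_q(Z^?,Y^?),M)$, the identification $H_q(Z^H,Y^H)\cong\widetilde H_{q-1}(Y^H)$ from contractibility of $Z^H\simeq (E_{\cH}G)^H$, and the finite-exponent input from Lemma~\ref{lem:cohcalc}, with the lcm-over-objects and $\Ext$-annihilation steps merely made explicit where the paper leaves them implicit. One small overstatement in your final paragraph: vanishing of $\underline{H}_1(Z,Y)$ is not actually essential, since its values already have finite exponent by the lemma and that alone annihilates the corresponding $E_2$-column by your own argument; only $\underline{H}_0(Z,Y)=0$, i.e.\ $Y^H\neq\emptyset$ for all $H\in\cH$, is genuinely needed.
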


\begin{proof} The Bredon cohomology of a pair can be calculated using an hyper-cohomology spectral sequence with $E_2$-term $$E_2^{p,q}=\Ext ^p _{\bbZ \Gamma_G} (H_q (Z^?, Y^?), M)$$ which converges to the equivariant cohomology group $H^{p+q} _G (Z, Y ; M)$ (see \cite[Proposition 3.3]{unlu-yalcin2}). Hence to show that the cohomology groups $H^{n+1} _G (Z, Y ; M)$ have finite exponent for all $n\geq 2$, it is enough to show that the ext-groups $$\Ext ^p _{\bbZ \Gamma_G} (H_q (Z^?, Y^?), M)$$
are finite groups for all $p,q$ with $p+q \geq 3$.  

We have that $Z^H \simeq (E_{\cH } G )^H \simeq *$ for every $H \in \cH$. So, we can conclude that $H_i (Z^H, Y^H )\cong \widetilde H_{i-1} (Y^H)$ for all $i \geq 1$ and  $H_0 (Z^H, Y^H)\cong  \bbZ $ if $Y^H= \emptyset$ and zero otherwise. Since $Y^H \neq \emptyset$ for every $H \in \cH$, we have $H_0 (Z^H, Y^H)=0$ for every $H \in \cH$. Moreover, by Lemma \ref{lem:cohcalc}, $\widetilde H_{i-1} (Y^H)$ has finite exponent for every $i\geq 1$. Hence the proof is complete.
\end{proof}

\section{Construction of free actions on products of spheres}\label{sect:FreeActions}

In this section we prove two consequences of Theorem \ref{thm:G-bundleconst} which are going to be main tools for the constructions of free actions on products of spheres. Throughout the section when we say $M$ is a smooth $G$-manifold we always mean that $M$ is a smooth manifold with a smooth $G$-action.  

\begin{theorem}\label{thm:G-actionconst} 
Let $G$ be a finite group and $\cH$ be a family of subgroups of $G$. Let  $M$ be a finite dimensional smooth $G$-manifold with isotropy subgroups lying in $\cH$. 
 
Suppose that we are given a compatible family $\bV$ of unitary representations over $\cH$ which factors through a strongly connected one-dimensional diagram of finite groups $\Gamma _*$.

Then, there exists a smooth $G$-manifold $M'$ diffeomorphic to $M\times \bbS^m$ for some $m>0$ such that for every $x\in M$, the $G_x$-action on $\{x \}\times \bbS^m$ is diffeomorphic to the linear $G$-sphere $\bbS (V_{G_x} ^{\oplus k} )$ for some $k\geq 1$. 
\end{theorem}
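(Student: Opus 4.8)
The plan is to deduce Theorem~\ref{thm:G-actionconst} from Theorem~\ref{thm:G-bundleconst} by first producing a $G$-vector bundle and then passing to its sphere bundle, which is automatically a smooth fiber bundle over $M$. Concretely, I would first apply Theorem~\ref{thm:G-bundleconst} to the $G$-CW-complex underlying $M$: since $M$ is a finite-dimensional smooth $G$-manifold with isotropy in $\cH$, it admits a finite-dimensional $G$-CW-structure, so the hypotheses of Theorem~\ref{thm:G-bundleconst} are met and there is a positive integer $k$ and a $G$-vector bundle $E \to M$ with fiber type $\bV^{\oplus k}$. The fiber over a point $x \in M$ carries the $G_x$-representation $V_{G_x}^{\oplus k}$, by the definition of fiber type.

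Next I would form the associated sphere bundle $\bbS(E) \to M$. Because $E$ is a $G$-vector bundle, the unit sphere bundle (with respect to a $G$-invariant fiberwise Hermitian metric, which exists by averaging) is a $G$-equivariant fiber bundle whose fiber over $x$ is the linear $G_x$-sphere $\bbS(V_{G_x}^{\oplus k})$, exactly as required by the statement. The remaining issue is that $\bbS(E)$ is a priori only a topological $G$-manifold fibered over $M$, and moreover it is a sphere \emph{bundle} rather than literally a product $M \times \bbS^m$. To address the diffeomorphism type, I would invoke the smoothing machinery: one replaces the topological sphere bundle by a smooth one, using that $M$ is smooth and the structure group can be reduced so that $\bbS(E)$ carries a smooth $G$-manifold structure with $\bbS(E) \cong M \times \bbS^m$ where $m = \dim_{\bbR}\bbS(V_{G_x}^{\oplus k})$. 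This is the step where I expect to lean on the analogous smoothing arguments already developed in \cite{unlu-yalcin2} (the rank two construction), adapting their identification of an equivariant sphere bundle with a product up to $G$-diffeomorphism after possibly enlarging $k$.

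The main obstacle will be the smoothing and trivialization step, i.e.\ upgrading the purely bundle-theoretic output of Theorem~\ref{thm:G-bundleconst} to a smooth $G$-manifold diffeomorphic to an honest product $M \times \bbS^m$. The construction of Theorem~\ref{thm:G-bundleconst} is homotopy-theoretic and produces the bundle only up to the classifying-space formalism, so one must (i) arrange a smooth model for the total space of the sphere bundle and (ii) trivialize the bundle as a product, which typically requires adding a trivial complementary summand (replacing $\bV^{\oplus k}$ by $\bV^{\oplus k}\oplus(\text{trivial})$, hence possibly enlarging $m$ and $k$) so that the resulting bundle becomes fiber-homotopy trivial and then genuinely trivial after smoothing. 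Throughout, the key compatibility to keep track of is that on each fiber the $G_x$-action remains the linear action on $\bbS(V_{G_x}^{\oplus k})$; the enlargement must preserve this fiberwise structure while achieving global triviality over $M$. I would carry out (i) and (ii) by citing and adapting the corresponding smoothing results from \cite{unlu-yalcin2}, which is where the genuinely technical work resides.
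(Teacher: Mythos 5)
Your proposal follows essentially the same route as the paper: apply Theorem~\ref{thm:G-bundleconst} to $M$ (viewed as a finite dimensional $G$-CW-complex), pass to the unit sphere bundle of a smooth model of the resulting $G$-vector bundle, and identify it with a product $M\times \bbS^m$ via the smoothing arguments of Corollary 4.4 in \cite{unlu-yalcin2}. The one place where your plan deviates, and where it would go wrong if executed as written, is the trivialization step: stabilizing by a trivial complementary summand replaces the fiber type $\bV^{\oplus k}$ by $\bV^{\oplus k}\oplus(\text{trivial})$, so the fiber over $x$ would become $\bbS(V_{G_x}^{\oplus k}\oplus \bbC^r)$ rather than the linear sphere $\bbS(V_{G_x}^{\oplus k})$ demanded by the conclusion (and needed later, e.g.\ to keep the isotropy control coming from fixed-point-freeness of the $V_E$). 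The paper avoids this entirely: the bundle produced by Theorem~\ref{thm:G-bundleconst} is pulled back along a $G$-map $M\to E_{\cH}G^{(n)}$, and since $1\in\cH$ the space $E_{\cH}G$ is contractible, so for $n>\dim M$ this map is non-equivariantly null-homotopic and the underlying non-equivariant bundle is already trivial; one then only has to upgrade a continuous trivialization to a smooth one. So you should replace the stabilization contingency by this observation; with that change your argument coincides with the paper's proof.
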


\begin{proof} The proof is essentially the same as the proof of Corollary 4.4 in \cite{unlu-yalcin2}. We summarize the argument here for the convenience of the reader. By Theorem \ref{thm:G-bundleconst} there is a topological $G$-vector bundle $p: E \to M$ with fiber type $\bV ^{\oplus k}$ for some $k\geq 1$. This bundle is obtained as a pullback of a bundle 
over $E_{\cH}G ^{(n)}$ for some $n$. By taking the value of $n$ larger than the dimension of $M$, we can assume that the bundle $p: E \to M$ is non-equivariantly a trivial bundle. Note that here we use the fact that $\cH$ is closed under taking subgroups, in particular, we have $1\in \cH$, hence $E_{\cH}G$ is contractible.

As a $G$-vector bundle, the bundle $p:E \to M$ is equivalent to a smooth $G$-vector bundle $p': E' \to M$. This smooth $G$-bundle can be constructed by replacing the universal $G$-bundle with a smooth universal $G$-bundle (see the proof of Corollary 4.4 in \cite{unlu-yalcin2} for details). Since $p$ is non-equivariantly trivial, the bundle $p'$ is also non-equivalently trivial as a topological bundle. One can replace continuous trivialization with a smooth trivialization to obtain a diffeomorphism $\bbS (E')\approx M \times \bbS ^m$ where $\bbS(E')$ is the total space of the sphere bundle $\bbS (E')\to M$  associated to $p$. For every $x\in M$, the sphere $\{x \} \times \bbS^m$ is mapped to $\bbS ((p')^{-1} (x)) \subseteq \bbS (E')$ under the above diffeomorphism. The $G_x$-action on $(p')^{-1} (x)$ is isomorphic to $G_x$-action on $p^{-1} (x)$ as $G_x$-vector spaces. Since $p: E \to M$ has fiber type $\bV^{\oplus k}$, the $G_x$-action on $p^{-1}(x)$ is isomorphic to $V_{G_x} ^{\oplus k}$. Thus we can conclude that $G_x$-action on $\{x \} \times \bbS^m$ is diffeomorphic to $G_x$-action on $\bbS (V_{G_x} ^{\oplus k})$ for some $k \geq 1$. 
\end{proof}

As an application of Theorem \ref{thm:G-actionconst}, we prove the following result which is a slight generalization of \cite[Theorem 6.7]{unlu-yalcin2}.

\begin{theorem}\label{thm:rank1isotropy}
Let $G$ be a finite group acting smoothly on a manifold $M$ such that all isotropy subgroups $G_x$ are rank one subgroups with prime power order. Then, there exists a positive integer $N$ such that $G$
acts freely and smoothly on $M \times \bbS^N$.
\end{theorem}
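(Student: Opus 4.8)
The plan is to apply Theorem \ref{thm:G-actionconst} to $M$ with a compatible family of \emph{fixed-point-free} representations, and then to read off freeness of the resulting $G$-action directly from the fibrewise isotropy statement of that theorem. First I would fix the family: let $\cH$ be the family of all subgroups of the isotropy subgroups $G_x$, $x\in M$. It is closed under conjugation and under passage to subgroups, its members are exactly the subgroups allowed as isotropy of $M$, and by hypothesis every $H\in\cH$ is a rank one group of prime power order. Thus each nontrivial such $H$ is cyclic or generalized quaternion, hence has periodic cohomology and admits a fixed-point-free unitary representation; moreover the restriction of a fixed-point-free representation to a subgroup is again fixed-point-free, and a direct sum of fixed-point-free representations is fixed-point-free. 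The aim is to assemble a compatible family $\bV=(V_H)_{H\in\cH}$ with each $V_H$ fixed-point-free, realised so that it factors through a strongly connected one-dimensional diagram of finite groups, so that Theorem \ref{thm:G-actionconst} applies.

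For the diagram I would use the ``prime decomposition'' poset $\cD$ with a single bottom object $\star$ and one top object $v_p$ for each prime $p$ dividing $|G|$, with $\star<v_p$. This is a one-dimensional (bipartite) poset whose realization is a star, hence contractible. I would set $\cH_\star=\{1\}$ and let $\cH_{v_p}$ consist of the $p$-subgroups in $\cH$; these are subfamilies with $\cH_\star\subseteq\cH_{v_p}$, and since every member of $\cH$ has prime power order they cover $\cH$. Strong connectedness (Definition \ref{def:connectedness}) is then immediate: for $H=1$ one has $\cD_H=\cD$, which is contractible, while for a nontrivial $p$-group $H$ one has $\cD_H=\{v_p\}$, a point; in both cases $|\cD_H|$ is simply connected.

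The substantive step, and the main obstacle, is the \emph{per-prime} factoring: for each prime $p$ I must produce a finite group $\Gamma_{v_p}$, a representation $\rho_{v_p}\colon\Gamma_{v_p}\to U(n)$, and a compatible family $\bA_{v_p}=(\alpha^p_H\colon H\to\Gamma_{v_p})_{H\in\cH_{v_p}}$ such that $\rho_{v_p}\circ\alpha^p_H$ is fixed-point-free for every rank one $p$-subgroup $H\in\cH$, with the $\alpha^p_H$ compatible with all the $G$-fusion maps $c_g$. Here the rank one hypothesis is essential: since the $p$-subgroups in $\cH$ are cyclic or generalized quaternion, their automorphism groups and their $G$-fusion are controlled by normalizers, so one can build $\Gamma_{v_p}$ explicitly (as an extension of such a $p$-group by the relevant fusion automorphisms) carrying a fixed-point-free $\rho_{v_p}$, following the technique of \cite[Theorem 6.7]{unlu-yalcin2}. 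Taking $\Gamma_\star=1$, and replacing each $\rho_{v_p}$ by enough copies of itself so that all $\rho_{v_p}$ share a common degree $n$ (which preserves fixed-point-freeness), the maps $\mu_{\star,v_p}\colon 1\to\Gamma_{v_p}$ and the family $\bA_*=(\bA_\star,\bA_{v_p})$ satisfy the compatibility of Definition \ref{def:compatible over d}, and $\rho_\star$ agrees with each $\rho_{v_p}\circ\mu_{\star,v_p}$. This exhibits $\bV$ as factoring through the strongly connected one-dimensional diagram $\Gamma_*$.

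Finally I would invoke Theorem \ref{thm:G-actionconst} to obtain a smooth $G$-manifold $M'$ diffeomorphic to $M\times\bbS^m$ such that, for each $x\in M$, the $G_x$-action on $\{x\}\times\bbS^m$ is equivariantly diffeomorphic to $\bbS(V_{G_x}^{\oplus k})$. Since $V_{G_x}$ is fixed-point-free, so is $V_{G_x}^{\oplus k}$, and $G_x$ acts freely on this sphere. The isotropy of a point $(x,v)\in M'$ is contained in $G_x$, because the projection $M'\to M$ is $G$-equivariant, and it fixes $v$ in the free $G_x$-sphere, so it is trivial; hence $G$ acts freely on $M'$. Transporting the action along the diffeomorphism $M'\approx M\times\bbS^m$ yields a free smooth $G$-action on $M\times\bbS^m$, proving the theorem with $N=m$. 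The only genuinely delicate point is the explicit per-prime construction of $\Gamma_{v_p}$ and $\rho_{v_p}$ carrying the $G$-fusion of the rank one $p$-subgroups; everything else is either formal or a routine adaptation of the proof of \cite[Theorem 6.7]{unlu-yalcin2}.
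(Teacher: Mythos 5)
Your overall skeleton matches the paper's: build a compatible family of free (fixed--point--free) representations over the family $\cH$ generated by the isotropy subgroups, realize it as factoring through a strongly connected one--dimensional diagram whose realization is a star, apply Theorem \ref{thm:G-actionconst}, and read off freeness from the fibrewise statement. Your closing freeness argument is correct and is exactly the paper's. The problem is the step you yourself flag as ``the main obstacle'': for each prime $p$ you must produce a single finite group $\Gamma_{v_p}$, a representation $\rho_{v_p}$, and maps $\alpha^p_H\colon H\to\Gamma_{v_p}$ compatible with all $G$-fusion among \emph{all} rank one $p$-subgroups in $\cH$ --- which in general fall into many distinct $G$-conjugacy classes and isomorphism types --- such that every $\rho_{v_p}\circ\alpha^p_H$ is free. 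Your description of $\Gamma_{v_p}$ as ``an extension of such a $p$-group by the relevant fusion automorphisms'' presupposes one $p$-group, does not address the multiple conjugacy classes, and is not carried out; the appeal to the technique of \cite[Theorem 6.7]{unlu-yalcin2} imports precisely the fusion-system machinery whose avoidance is the point of the diagram version of the L\"uck--Oliver theorem. As written, the central construction is missing, so the proof is incomplete.

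The paper closes this gap by refining your poset: instead of one top vertex $v_p$ per prime, it takes one top vertex $d$ per $G$-conjugacy class of subgroups of prime order, with $\cH_d=\{H\in\cH : \Omega_1(H)\simeq_G d\}\cup\{1\}$ (each nontrivial $H\in\cH$, being cyclic or generalized quaternion, has a unique subgroup $\Omega_1(H)$ of prime order, so $\cD_H$ is a single point and the diagram is still strongly connected). Within one such block all fusion maps are realized by elements of $N_G(d)$, so one can simply take $\Gamma_d=N_G(d)$, let $\alpha^d_H$ be conjugation of $H$ onto a subgroup containing $d$ followed by inclusion, and let $\rho_d$ be a suitable multiple of $\Ind^{N_G(d)}_d$ of the reduced regular representation of $d$; this restricts to a free representation of every $H\in\cH_d$ because its restriction to $\Omega_1(H)$ is a multiple of the reduced regular representation. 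Everything is then explicit and the delicate step disappears. Your coarser per-prime decomposition could probably be repaired (for instance by taking $\Gamma_{v_p}$ to be a product of the normalizers $N_G(d)$ over the order-$p$ classes $d$ and $\rho_{v_p}$ a tensor product of the $\rho_d$, noting that fusion never connects subgroups with non-conjugate $\Omega_1$), but some such argument must actually be supplied.
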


\begin{proof} Let $\cH$ denote the family of all rank one subgroups of $G$ with prime power order, plus the trivial subgroup.  If $H$ is a rank one $p$-group, then it has a unique subgroup of order $p$, denoted by $\Omega _1 (H)$. Let $\cD$ denote the poset of conjugacy class representatives of subgroups $K \leq G$ such that either $K$ has prime order or $K=1$. The ordering in $\cD$ is given by the usual inclusion of trivial subgroup into other subgroups, hence the realization of $\cD$ is a star shaped tree. For every $1\neq d\in \cD$, let $\cH_d$ denote the subfamily $$\cH_d:=\{ H \in \cH \colon \Omega_1(H) \simeq _G d \}\cup \{ 1\}.$$ Take $\cH_{1}=\{1\}$. It is easy to see  that the collection of subfamilies $\{ \cH _d \} _{d\in \cD}$ covers $\cH$ and that $\cH_*$ is strongly closed. 

For each $1\neq d\in \cD$, take $\Gamma _d=N_G(d)$, normalizer of the subgroup $d$ in $G$, and let $\Gamma _1=\{1\}$. For every $d\in \cD$, let $m_d =|N_G(d)|(p-1)/p$ where $p$ is equal to the order of the subgroup $d$. Let $n$ be a positive integer that is divisible by $m_d$ for all $d\in \cD$, and let $n_d=n/m_d$. For each $1\neq d \in \cD$, let $\rho_d: \Gamma _d \to U(n)$ be a $n_d$ multiple of the induced representation $V_d=\Ind ^{N_G(d)}_d W$ where $W : d \to U(p-1) $ is  the reduced regular representation of $d$. We take $\rho_1: \Gamma _1 \to U(n)$ as $n$ copies of the trivial representation of the trivial group. It is clear that the family $\{\rho_d\}$ is a representation of the diagram of groups $\Gamma_*$. 

Now we describe the diagram $\bA_*$ of compatible families of representations. For each $1\neq d\in \cD$, and $H \in \cH_d$, let $\alpha_H ^d:H \to \Gamma _d$ be the map defined by $h \to ghg^{-1}$ where $g$ is an element in $G$ such that $g\Omega_1( H)g^{-1}=d$. Note that the choice of $g$ is unique up to an element in $\Gamma _d=N_G(d)$, so $\alpha_H^d$ is well-defined as an element in $\Rep(H, \Gamma _d)=\Hom(H, \Gamma _d )/\Inn (\Gamma_d)$. For $d=1$, we take $\alpha_1:1 \to \Gamma _1$ as the identity map. 

Let $\bV$ be the compatible family of representations $V_H : H \to U(n)$ over $H \in \cH$ such that for all $H \in \cH_d$, $V_H=\rho_d \circ \alpha^d_H$. The family $\bV$ satisfies the conditions of Theorem \ref{thm:G-actionconst}, so by applying this theorem, we obtain a smooth $G$-manifold $M'$ diffeomorphic to $M \times S^N$ for some $N\geq 1$. Since all the representations $V_H$ in the family $\bV$ are free, the $G$-action on $M'$ is free.
\end{proof}

Now we will prove a slightly stronger version of Theorem \ref{thm:G-actionconst} which will be used  in the next section for the construction of free actions of rank three $p$-groups. We first prove a lemma.

\begin{lemma}\label{lem:ZeroExt} Let $G$ be a finite group, $\cH$ be a family of subgroups of $G$, and let $\Gamma _G :=\Or _{\cH} (G)$ denote the orbit category of $G$ over $\cH$. Suppose that $N$ is a $\bbQ \Gamma _G$-module such that $N(H)= 0$ for all $H\in \cH$ except possibly when $H$ is a cyclic subgroup of prime power order. Then for every $\bbQ \Gamma _G$-module $M$, we have $\Ext ^i _{\bbQ \Gamma _G } (N, M)=0$ for all $i\geq 2$.
\end{lemma}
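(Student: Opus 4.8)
The plan is to reduce the computation of $\Ext^i_{\bbQ\Gamma_G}(N,M)$ to a homological statement about the orbit category $\Gamma_G$ and then to exploit the fact that $N$ is concentrated on cyclic subgroups of prime power order. The key structural input is that, over $\bbQ$, the orbit category of a finite group has very good homological finiteness properties: the modules supported on a single conjugacy class of subgroups can be analyzed via their \emph{atomic} or \emph{simple} pieces. First I would recall that every $\bbQ\Gamma_G$-module has a finite filtration whose subquotients are supported at a single conjugacy class $[H]$, in the sense that such a subquotient $S_{[H]}$ is nonzero only at subgroups conjugate to $H$. Since $\Ext$ is additive in each slot and turns short exact sequences into long exact sequences, it suffices to prove the vanishing $\Ext^i_{\bbQ\Gamma_G}(S_{[H]},M)=0$ for $i\geq 2$ whenever $H$ is a cyclic subgroup of prime power order, because by hypothesis $N$ has a filtration whose only nonzero subquotients are supported at such $H$.

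Next I would compute the projective dimension of the atomic module $S_{[H]}$. The standard tool is that for a module supported at a single conjugacy class $[H]$, a projective resolution over $\bbZ\Gamma_G$ (or $\bbQ\Gamma_G$) is governed by the group cohomology of the Weyl-type automorphism group $W_G(H)=N_G(H)/H$ acting on the appropriate value, together with the poset cohomology of the subgroups strictly containing $H$. Concretely, the value $S_{[H]}(H)$ carries an action of $W_G(H)$, and over $\bbQ$ the group ring $\bbQ[W_G(H)]$ is semisimple, so the ``local'' part contributes nothing beyond degree zero. The crucial observation for $H$ cyclic of prime power order is that the poset of proper overgroups that one must resolve against is simple enough — in fact, for such $H$ one expects the relevant relative cohomology to vanish above degree one. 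I would make this precise by writing down the explicit two-term structure: the free $\bbQ\Gamma_G$-module generated at the object $G/H$ surjects onto $S_{[H]}$, and the kernel is again supported on subgroups properly containing $H$, whose contribution to $\Ext$ with $M$ can be controlled so that nothing survives in homological degrees $\geq 2$.

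The hardest part will be pinning down exactly why the projective dimension of $S_{[H]}$ over $\bbQ\Gamma_G$ is at most one for $H$ cyclic of prime power order, as opposed to merely finite. I expect to need the specific feature of prime-power cyclic groups that their subgroup lattice below them is a chain (so $\Omega_1(H)$ is the unique minimal subgroup), which makes the relevant constructible/simplicial object contractible or one-dimensional and forces the higher $\Ext$ groups to vanish. An alternative and perhaps cleaner route, which I would pursue in parallel, is to use the fact that over $\bbQ$ one has a splitting of the category of $\bbQ\Gamma_G$-modules indexed by conjugacy classes of subgroups (a consequence of the semisimplicity of the rational representation theory of the automorphism groups, in the spirit of the decomposition used for Mackey functors and cohomological functors over $\bbQ$), under which $\Ext^i$ decomposes as a sum of group-cohomology-type contributions. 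Under such a splitting, the module $N$ lands in the summands indexed by cyclic $p$-subgroups, and the vanishing of $\Ext^i$ for $i\geq 2$ becomes the statement that these summands are homologically one-dimensional; I would then verify this dimension bound directly using the chain structure of the subgroup lattice of a cyclic $p$-group.

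Regardless of which route succeeds, the final assembly is the same: combine the per-subquotient vanishing with the long exact sequence of $\Ext$ coming from the filtration of $N$ to conclude $\Ext^i_{\bbQ\Gamma_G}(N,M)=0$ for all $i\geq 2$ and all $M$.
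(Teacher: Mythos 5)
Your skeleton matches the paper's proof: filter $N$ into atomic pieces concentrated at a single conjugacy class $[H]$, note that semisimplicity of $\bbQ W_G(H)$ makes the local data projective, and reduce to showing that an atomic $\bbQ\Gamma_G$-module at a cyclic subgroup of prime power order has projective dimension at most one. But the proposal stops exactly where the work begins. In the paper, the projective cover is $P_0=E_HA$ (the extension functor applied to the value $A$ at $H$, projective because every $\bbQ W_G(H)$-module is), and the entire content of the lemma is the identification of the kernel $X_HA$ of $E_HA\to I_HA$ with $E_K\bigl((X_HA)(K)\bigr)$, where $K$ is the \emph{unique index-$p$ subgroup} of $H$. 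This in turn reduces to proving that composition gives a bijection of bisets
$$\Map_G(G/K,G/H)\times_{W_GK}\Map_G(G/L,G/K)\longrightarrow \Map_G(G/L,G/H)$$
for all $L\leq K$, and it is precisely here that the hypothesis on $H$ enters: since $H$ is cyclic of prime power order, $L$ is the unique subgroup of its order, so $(G/H)^L=N_G(L)/H$, and an orbit count together with a surjectivity check closes the argument. You flag this as ``the hardest part'' and then only assert that the kernel ``can be controlled''; that assertion \emph{is} the lemma, so as written there is a genuine gap.

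Two further cautions. First, with the (contravariant) conventions the paper uses, the kernel of the projective cover of an atomic module at $[H]$ is supported on subgroups properly \emph{subconjugate} to $H$, not on proper overgroups as you write; the relevant chain condition is the one below $H$, headed by its unique maximal subgroup $K$, not the condition that $\Omega_1(H)$ is the unique minimal subgroup. Second, your alternative route via a splitting of $\bbQ\Gamma_G$-modules indexed by conjugacy classes is not available: unlike rational Mackey functors, orbit-category modules over $\bbQ$ do not decompose in this way (already for $G=C_p$ the category algebra is a nonsemisimple triangular algebra, and the atomic module at $G/G$ is not projective), so that branch of the plan would fail and you would be forced back to the explicit two-term resolution.
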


\begin{proof} The statement is equivalent to the statement that $N$ has a projective resolution of the form $0\to P_1 \to P_0 \to N\to 0$ as a $\bbQ \Gamma _G$-module. Note that we only need to prove this for an atomic functor and the general case follows by induction on the length of the module $N$. Recall that a $\bbQ \Gamma _G$-module $N$ is called an atomic functor if it has nonzero value only on conjugacy classes of a fixed subgroup $H$. In this case $N=I_H A$ for some rational $W_G(H)$-module $A$, where $W_G(H)=N_G(H)/H$ and $I_H $ denote the inclusion functor (see \cite[9.29]{lueck}) defined by $$(I_H A)(K) =\begin{cases} A \otimes _{\bbQ W_G(H)} \bbQ \Map _G (G/K, G/H) & \text{if $H=_G K$}\\
0 & \text{otherwise}\\
\end{cases}$$   If $H=1$, then $I_1A$ is a projective $\bbQ \Gamma _G$-module. So assume $H\neq 1$. For $P_0$ we will take $E_HA$, where $E_H$ denotes the extension functor defined by $$(E_H A)(K) =A \otimes _{\bbQ W_G(H)} \bbQ \Map _G (G/K, G/H)$$
for  $K \in \cH$ (see \cite[9.28]{lueck}). Since $E_H$ takes projective $\bbQ W_GH$-modules to projective $\bbQ \Gamma_G$-modules, $E_HA$ is projective and there is a  canonical map $E_H A \to I_H A$ which comes from adjointness properties of the functor $E_H$. Let $X_HA$ denote the kernel of this map.  Then $$(X_H A) (L)=A \otimes _{W_G H} \bbQ \Map_G (G/L, G/H)$$ for $L < _G H$ and $(X_HA)(L)=0$ for  all other subgroups $L \leq G$. There are obvious restriction and conjugation maps between nonzero values of $X_HA$ induced by $G$-maps $G/L\to G/L'$.  

Let $H$ be a cyclic group of order $p^n$ for some $n\geq 1$, and $K$ be an index $p$ subgroup in $H$. We claim that $X_HA \cong E_K \bigl ((X_HA)(K) \bigr )$. Note that this will imply that $X_HA$ is a projective $\bbQ \Gamma _G$-module, hence we will have the desired projective resolution.

To show the claim, observe that there is a natural map $$\varphi: E_K \bigl ( (X_H A) (K) \bigr )\to X_H A$$
which induces an isomorphism at subgroups conjugate to $K$. When evaluated at $L\leq K$, this map gives a map of $W_G L$-modules
$$ A \otimes _{W_G H} \bbQ \Map_G (G/K, G/H)\otimes _{W_GK} \bbQ \Map _G (G/L, G/K) \to  A \otimes _{W_G H} \bbQ \Map_G (G/L, G/H).$$
which is induced by a map of $W_GH$-$W_GL$-bisets 
$$ \mu: \Map_G (G/K, G/H)\times _{W_GK} \Map _G (G/L, G/K) \to  \Map_G (G/L, G/H).$$ Note that $\mu$ takes the equivalence class of a pair of maps $(f_1,f_2)$ to their composition $f_1\circ f_2$. We claim that $\mu$ is a bijection for all $L\leq K$. This will imply that $\varphi$ is an isomorphism. 

Note that a $G$-map $f : G/L\to G/H$ is uniquely determined by a coset $gH$ where $f(L)=gH$. For this to make sense, the coset representative $g$ has to satisfy the condition that $g^{-1} L g \leq H$. In other words, we can identify $\Map _G (G/L, G/H)$ with the set $$ (G/H)^L =\{ gH \mid g^{-1} L g \leq H \}.$$
The left $W_G H$-action on $\Map_G (G/L, G/H)$ becomes a right action on the set $(G/H)^L$ which is given by $gH \cdot nH= gnH$. It is easy to see that this action is free. Let $\mathcal{G}=\{g_1H, \dots , g_mH\}$ be a set of $W_GH$-orbit representatives of the free $W_GH$-action on $(G/H)^L$. Note that $m$ is equal to the number of $G$-conjugates of $H$ that include $L$.

Since $H$ is cyclic, $L$ is the unique subgroup of $H$ with order equal to $|L|$, so we have $L \leq H \leq N_G (H) \leq N_G(L)$. Also note that if $gH\in (G/H)^L$, then $g \in N_G(L)$. So in our particular situation, we have $(G/H)^L=N_G(L)/H$, and hence $m=|N_G(L):N_G(H)|$.

On the left hand side of the arrow for $\mu$ we have a cartesian product of a free $W_GH$-set with a free $W_GK$-set over $W_GK$. Let $\mathcal X =\{x_1H , \dots , x_s H \}$ be a set of orbit representatives of the free $W_GH$-set $(G/H)^K$. As above we have $(G/H)^K=N_G(K)/H$ and $s=|N_G(K): N_G(H)|$. Similarly, let $\mathcal Y =\{y_1K , \dots , y_t K \}$ be a set of orbit representatives of the free $W_G K$-set $(G/K)^L$. We have $(G/K)^L=N_G(L)/K$ and $t=|N_G(L): N_G(K)|$.

After cancelling the free $W_GK$-orbits, we see that the number of free $W_GH$-orbits on both image and domain of $\mu$ are equal since $st=m$. Hence, to show that $\mu$ induces a bijection, it is enough to show that $\mu$ is surjective. Note that $\mu$ maps the pair $(x_i H, y_j K)$ to $y_j x_i H$.  Let $gH \in (G/H)^L$. Observe that $g K g^{-1}$ is the unique maximal subgroup in $gHg^{-1}$, hence we have $L \leq gK g ^{-1}$. This means $g=y_j n$ for some $y_j K \in \mathcal Y$ and $nK  \in W_GK$. Since $n$ normalizes $K$, we have $K \leq nHn^{-1}$, so $n=x_i n'$ for some $x_i H \in \mathcal X$ and $n' \in W_GH$. This shows that $gH$ is the image of $(x_i n'H, y_j K )$ under $\mu$.
\end{proof}

\begin{definition}\label{def:almost strongly connected G_*}
Let $\cH_*$ be a compatible family of subfamilies. We say $\cH_*$ is \emph{almost strongly connected} if the realization of the poset $\cD _H=\{ d\in \cD \colon H \in \cH_d\}$ is simply connected for all $H \in \cH$ except possibly for some subgroups which are cyclic of prime power order, and for such subgroups $\cD_H$ is either empty or a disjoint union of points.

If $\bV$ factors through a diagram of finite groups $\Gamma _*$ over a one-dimensional diagram $\cD$ and if $\cH_*$ is almost strongly connected, then we say $\bV$ factors through an \emph{almost strongly connected one-dimensional diagram of finite groups $\Gamma _*$}.  
\end{definition}

Now we state our second main result in this section.

\begin{theorem}\label{thm:AlmostConnected} 
Let $G$, $\cH$, and $M$ be as in Theorem \ref{thm:G-actionconst}.
Suppose that we are given a compatible family $\bV$ of unitary representations over $\cH$ which factors through an almost strongly connected one-dimensional diagram of finite groups $\Gamma _*$. Then, the conclusion of Theorem \ref{thm:G-actionconst} still holds.  
\end{theorem}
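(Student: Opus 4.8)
The plan is to prove the bundle-level conclusion of Theorem \ref{thm:G-bundleconst} under the weaker hypothesis that $\cH_*$ is almost strongly connected; once this is available, the smooth manifold $M'$ is produced exactly as in the proof of Theorem \ref{thm:G-actionconst}, since the passage from a $G$-vector bundle of fiber type $\bV^{\oplus k}$ to the diffeomorphism $M'\approx M\times \bbS^m$ never uses any connectedness assumption. So I would run the proof of Theorem \ref{thm:G-bundleconst} verbatim: form $Y=\hocolim_{\cD}F$, the maps $B\rho_*:Y\to B_{\cH}(G,\bV)$ and $\beta:Y\to E_{\cH}G$, the mapping cylinder $Z$ of $\beta$, and reduce to showing that the obstruction groups $H^{n+1}_G(Z,Y;\pi_n(B_{\cH}(G,\bV^{\oplus k})^?))$ have finite exponent for all $n\geq 2$. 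As in the original argument, finite exponent is all that is required, since such obstructions are killed by passing to a Whitney multiple $\bV^{\oplus k}$ (i.e.\ by enlarging $k$).

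The hypothesis enters only in the homology computation behind Lemma \ref{lem:cohcalc} and Proposition \ref{pro:cohcalc}, so I would first redo that computation. Exactly as before, $Y^H$ is the homotopy colimit of $F^H$ over $\cD_H$, and rationally $H_j(Y^H;\bbQ)\cong H_j(|\cD_H|;\bbQ)$. Under the almost strongly connected hypothesis, $|\cD_H|$ is a tree (hence contractible) for every $H$ that is not cyclic of prime power order, while for cyclic $H$ of prime power order it is empty or a finite disjoint union of points. In either case $\widetilde H_j(Y^H)$ has finite exponent for every $j\geq 1$, so the coefficient systems $H_q(Z^?,Y^?)$, whose value at $H$ is $\widetilde H_{q-1}(Y^H)$, still have finite exponent for every $q\geq 2$. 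The genuinely new feature lives in degrees $q=0,1$: the module $H_0(Z^?,Y^?)$, which is $\bbZ$ at $H$ when $Y^H=\emptyset$ and $0$ otherwise, and $H_1(Z^?,Y^?)$, with value the free abelian group $\widetilde H_0(Y^H)$, need no longer have finite exponent; but by the dichotomy above they are supported entirely on the conjugacy classes of cyclic subgroups of prime power order.

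I would then feed the hyper-cohomology spectral sequence $E_2^{p,q}=\Ext^p_{\bbZ\Gamma_G}(H_q(Z^?,Y^?),M)\Rightarrow H^{p+q}_G(Z,Y;M)$ and show that every $E_2^{p,q}$ with $p+q\geq 3$ has finite exponent; since for a fixed total degree only finitely many columns contribute, the abutment then has finite exponent. For $q\geq 2$ the coefficient module has finite exponent, hence so does every $\Ext^p$. For $q=0,1$ the coefficient module is supported on cyclic subgroups of prime power order, and here I would invoke the integral analogue of Lemma \ref{lem:ZeroExt}: for any $\bbZ\Gamma_G$-module $N$ supported on such subgroups and any $\bbZ\Gamma_G$-module $M$, the group $\Ext^p_{\bbZ\Gamma_G}(N,M)$ has finite exponent for all $p\geq 2$. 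This covers $q=1$ (where $p\geq 2$) and $q=0$ (where $p\geq 3$) at once, completing the finite-exponent verification.

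The main obstacle is precisely this integral analogue of Lemma \ref{lem:ZeroExt}, since over $\bbQ$ that lemma yields honest vanishing while over $\bbZ$ the atomic pieces are no longer projective. I would prove it by reducing, via the atomic filtration and the long exact Ext sequence, to the case $N=I_HA$ with $H$ cyclic of prime power order, and then reusing the combinatorial core of the proof of Lemma \ref{lem:ZeroExt}: the short exact sequence $0\to E_KB\to E_HA\to I_HA\to 0$, with $K$ the index $p$ subgroup of $H$ and $B=(X_HA)(K)$, which is valid over $\bbZ$ because the biset bijection $\mu$ is coefficient-independent. The extension functors $E_H,E_K$ are exact (the $W_G(H)$-sets $(G/H)^L$ being free) and left adjoint to evaluation, giving change-of-rings isomorphisms $\Ext^p_{\bbZ\Gamma_G}(E_HA,M)\cong \Ext^p_{\bbZ W_G(H)}(A,M(H))$ and likewise for $E_KB$. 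As $W_G(H)$ is a finite group, these are annihilated by $|W_G(H)|$ for every $p\geq 1$, and substituting them into the long exact Ext sequence shows that $\Ext^p_{\bbZ\Gamma_G}(I_HA,M)$ has finite exponent for all $p\geq 2$, as needed.
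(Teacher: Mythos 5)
Your proposal is correct and follows the paper's architecture almost exactly: the same homotopy colimit $Y$, mapping cylinder $Z$, obstruction-theoretic reduction, and hyper-cohomology spectral sequence, with the almost strongly connected hypothesis entering only through the observation that, up to modules of finite exponent, $H_q(Z^?,Y^?)$ is concentrated in degrees $q=0,1$ and supported on cyclic subgroups of prime power order. The one real divergence is the last step. The paper passes to rational coefficients and applies Lemma \ref{lem:ZeroExt} as stated, obtaining honest vanishing of $\Ext^p_{\bbQ\Gamma_G}$ for $p\geq 2$ from the length-one projective resolution $0\to E_K((X_HA)(K))\to E_HA\to I_HA\to 0$; the return trip from rational vanishing to finite exponent of the integral $E_2$-page is left implicit and uses that the values of $H_q(Z^?,Y^?)$ are finitely generated. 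You instead prove an integral, finite-exponent analogue of Lemma \ref{lem:ZeroExt} directly, reusing the same short exact sequence (whose key input, the biset bijection $\mu$, is indeed coefficient-independent) together with the change-of-rings isomorphism for $E_H$ and the transfer; this never leaves $\bbZ$ and makes the finite-exponent bookkeeping explicit, which is arguably cleaner. One caveat: your integral lemma is stated too generally. The transfer argument annihilates $\Ext^1_{\bbZ W_G(K)}(B,M(K))$ by $|W_G(K)|$ only when $B$ is projective over $\bbZ$ (otherwise $\Ext^1_{\bbZ}(B,M(K))$ need not vanish; take $B=\bbQ$, $M(K)=\bbZ$), so ``any $\bbZ\Gamma_G$-module $N$ supported on cyclic prime-power subgroups'' is false as stated in the borderline degree $p=2$. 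You should either restrict to modules with finitely generated values (splitting off the torsion submodule, which has finite exponent, via the long exact $\Ext$ sequence) or simply note that the modules actually arising here, $N_0$ and $N_1$, have free abelian values ($\bbZ$ and $\widetilde H_0(Y^H)$ respectively), so that $A$ and $B=(X_HA)(K)$ are $\bbZ$-free and the transfer applies in all degrees $p\geq 1$. With that qualification the argument is complete.
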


\begin{proof} We need to show that for every $n\geq 0$, there is $G$-map $E_{\cH} G ^{(n)} \to B_{\cH} (G, \bV ^{\oplus k} )$ for some $k\geq 1$. The rest of the argument follows as in the proof of Theorem \ref{thm:G-actionconst}. 

As in the proof of Theorem \ref{thm:G-bundleconst}, we can consider the homotopy colimit $$Y=\hocolim _{d\in \cD} B_{\cH_d} (G, \bA _d).$$ There is a $G$-map  $\beta: Y \to E_{\cH} G$. Let $Z$ denote the mapping cylinder of $\beta$. 

For every $k\geq 1$, there is a $G$-map $f_k: Y \to  B_{\cH} (G, \bV ^{\oplus k})$. We need to show that for every $n\geq 0$, there is a $k\geq 1$ such that $f_k$ extends to a map $\tilde f_k ^{(n)} : Y \cup Z^{(n)} \to B_{\cH} (G, \bV ^{\oplus k} )$.  The obstructions for extending $\tilde f_k^{(n)}$ to $(n+1)$-skeleton lie in the Bredon cohomology group $$H^{n+1}_G (Z, Y; \pi _n (B_{\cH} (G, \bV ^{\oplus k } )^?))$$ and we need these obstruction groups to be finite for all $n\geq 2$.  

As before we can use the hyper-cohomology spectral sequence to calculate these cohomology groups. The $E_2$-term of this spectral sequence is of the form  $$E_2 ^{p,q} =\Ext _{\bbZ \Gamma _G } ^p (H_q (Z ^{?} , Y^{?}) ; \pi _n (B _{\cH} (G, \bV ^{\oplus k})^?))$$
where $\Gamma _G=\Or _{\cH} G$ is the orbit category over the family $\cH$. So it is enough to show that for every $\bbQ \Gamma_G$-module $M$, the ext-group $$E_2 ^{p,q}=\Ext _{\bbQ \Gamma _G } ^p (H_q (Z ^{?} , Y^{?}; \bbQ) ; M)$$ is zero for all $p,q$ with $p+q \geq 3$.

Let $N_q$ denote $\bbQ \Gamma _G$-module $H_q (Z^{?}, Y^{?}; \bbQ)$. Repeating the argument used in the proof of Lemma \ref{lem:cohcalc}, we see that $$N_q(H)= H_q (Z^H, Y^H; \bbQ)\cong \widetilde H_{q-1} (|\cD _H|;\bbQ)=0$$ for every $H \in \cH$ except possibly when $H$ is a cyclic group of prime power order. When $H$ is a cyclic group of prime power order, $\cD_H$ is either empty or disjoint union of points, so $N_q$ is nonzero only for $q=0,1$. By Lemma \ref{lem:ZeroExt}, $\Ext ^p _{\bbQ \Gamma _G } (N_q , M)=0$ for all $p\geq 2$, so we can conclude that $\Ext ^p _{\bbQ \Gamma _G } (N_q , M)=0$ for all $p,q$ with $p+q \geq 3$. This completes the proof.
\end{proof}

\section{Construction for rank three $p$-groups}\label{sect:RankThreeConst}

In this section we prove Theorem \ref{thm:main}. In the proof we use Theorem \ref{thm:AlmostConnected}, but we first explain how we can reduce the proof of Theorem \ref{thm:main} to the specific situation considered in Theorem \ref{thm:AlmostConnected}.

Let $p$ be an odd prime and $G$ be a rank three $p$-group. In \cite[Theorem 6.7]{unlu-yalcin2}, it is proved that if $G$ acts smoothly on a manifold $M$ with rank one isotropy subgroups, then $G$ acts freely and smoothly on a manifold diffeomorphic to $M \times \bbS ^N$ for some $N>0$. So to prove Theorem \ref{thm:main}, it is enough to prove the following proposition.

\begin{proposition}\label{prop:main} Let $p$ be an odd prime and $G$ be a rank three 
$p$-group.  Then, there exists a smooth $G$-manifold $M$ diffeomorphic to $\bbS^n\times \bbS^m$ for some $n,m>0$, such that for every $x\in M$, the isotropy subgroup $G_x$ has $\rk (G_x)\leq 1$.
\end{proposition}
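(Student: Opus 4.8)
The plan is to follow the strategy sketched in the introduction, taking $\bbS(V)$ as the starting manifold and building the second sphere factor by means of Theorem \ref{thm:AlmostConnected}. First I would fix a central element $c$ of order $p$ in $G$, a nontrivial one-dimensional representation $W$ of $\la c\ra$, and set $V=\Ind_{\la c\ra}^G W$, so that $\bbS(V)=\bbS^n$. Because $c$ is central, the family $\cH=\{H\le G : H\cap\la c\ra=1\}$ is closed under conjugation and under taking subgroups, and every isotropy subgroup $G_x$ of $\bbS(V)$ lies in $\cH$. A short argument shows that each $H\in\cH$ has $\rk(H)\le 2$: if $E\le H$ were elementary abelian of rank $3$, then $E\times\la c\ra$ would be elementary abelian of rank $4$ in $G$, contradicting $\rk(G)=3$.

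Next I would invoke the class function $\chi$ of Jackson and Klaus (\cite[Proposition 3.3]{klaus}, \cite[Proposition 20]{jackson2}): it restricts to a genuine character on each $H\in\cH$, it is effective, and it therefore determines a compatible family $\bV_{\chi}=(V_H)_{H\in\cH}$ in the sense of Definition \ref{def:compatible family}, with $V_H$ the $H$-representation whose character is $\chi|_H$. As asserted in the introduction, effectiveness guarantees that for each $H\in\cH$ the $H$-action on $\bbS(V_H)$ has rank one isotropy, i.e.\ every subgroup fixing a point of $\bbS(V_H)$ has rank $\le 1$; this is the property I will need at the end.

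The heart of the argument --- and the step I expect to be the main obstacle --- is to exhibit a quadruple $(\Gamma_*,\rho_*,\cH_*,\bA_*)$ showing that $\bV_{\chi}$ factors through an almost strongly connected one-dimensional diagram of finite groups, in the sense of Definitions \ref{def:factors through G_*} and \ref{def:almost strongly connected G_*}. Since no single finite group $\Gamma$ realizes all of $\bV_{\chi}$ at once, I would cover $\cH$ by a bipartite poset $\cD$ of subfamilies $\{\cH_d\}$, chosen so that on each $\cH_d$ the class function $\chi$ is realizable through a finite group $\Gamma_d$ (with $\rho_d$ the corresponding representation and $\alpha_H^d$ the structure maps), and so that for every $H\in\cH$ the subposet $\cD_H=\{d : H\in\cH_d\}$ has simply connected realization, \emph{except} for the cyclic $p$-subgroups, for which $\cD_H$ may be only a disjoint union of points. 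Verifying the factorization --- the compatibility $\bA_y|_{\cH_x}=\mu_{x,y}\circ\bA_x$ together with the equalities $\bV_{\chi}|_{\cH_d}=\rho_d\circ\bA_d$ --- and the almost-connectedness of the $\cD_H$ is where all the group-theoretic and fusion-theoretic work lies. I expect the cyclic $p$-subgroups to be precisely the subgroups where ordinary strong connectivity fails, which is exactly why the weaker Theorem \ref{thm:AlmostConnected}, rather than Theorem \ref{thm:G-bundleconst}, is the correct tool here.

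Finally, with the factorization in hand, I would apply Theorem \ref{thm:AlmostConnected} to the smooth $G$-manifold $\bbS(V)$ and the family $\bV_{\chi}$. This yields a smooth $G$-manifold $M$ diffeomorphic to $\bbS(V)\times\bbS^m=\bbS^n\times\bbS^m$ such that, for each $x\in\bbS(V)$, the $G_x$-action on $\{x\}\times\bbS^m$ is diffeomorphic to $\bbS(V_{G_x}^{\oplus k})$ for some $k\ge 1$. The isotropy subgroup of a point $(x,y)\in M$ is $(G_x)_y$, which is an isotropy subgroup of the $G_x$-action on $\bbS(V_{G_x}^{\oplus k})$; since passing to Whitney multiples does not enlarge the set of isotropy subgroups occurring and $\bbS(V_{G_x})$ has rank one isotropy by effectiveness of $\chi$, we conclude $\rk\bigl((G_x)_y\bigr)\le 1$. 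Thus $M$ has rank $\le 1$ isotropy everywhere, completing the proof of the proposition.
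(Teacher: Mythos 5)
Your overall architecture matches the paper's: linear sphere $\bbS(V)$ for a central $c$ of order $p$, the family $\cH$ of subgroups meeting $\la c\ra$ trivially, the Jackson--Klaus class function $\chi$ producing the compatible family $\bV_{\chi}$, and Theorem \ref{thm:AlmostConnected} to produce the second sphere factor with rank one isotropy. However, there is a genuine gap at exactly the point you flag as ``where all the group-theoretic work lies'': you never construct the quadruple $(\Gamma_*,\rho_*,\cH_*,\bA_*)$, and that construction is the actual mathematical content of this part of the paper, not a routine verification. The paper does it by taking a normal subgroup $Q\cong C_p\times C_p$, classifying the subgroups of $\cH$ by their relation to $Q$ and $C_G(Q)$ (types A, B, C, E), and then setting $\cD=\{a,e_1,\dots,e_m\}$ (a \emph{discrete} poset, not a genuinely bipartite one), with $\Gamma_a=C_G(Q)$, $\Gamma_{e_i}=N_G(E_i)$ for conjugacy representatives $E_i$ of the maximal elementary abelian subgroups of type E subgroups, explicit induced representations $\rho_a=n_a\Ind_{\la a\ra}^{C_G(Q)}I_{\la a\ra/1}$ and $\rho_{e_i}=n_{e_i}\Ind_{E_i}^{N_G(E_i)}W_i$, and structure maps $\alpha_H^d$ given by conjugation followed by inclusion. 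Almost strong connectedness then requires showing that every subgroup left uncovered, or lying in two of the subfamilies, is cyclic (Lemmas \ref{lem:firsttype} and \ref{lem:BEtypes} and the uniqueness of the rank two elementary abelian subgroup in a type E group), and the factorization $\bV_{\chi}|_{\cH_d}=\rho_d\circ\bA_d$ requires the character computations of Lemma \ref{lem:FactorsThrough}. A proof that defers all of this is not a proof of the proposition.

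A second, smaller omission: you skip the reduction to the case where $Z(G)$ is cyclic. If $\rk Z(G)\geq 2$ the paper disposes of the proposition directly, by taking a second central element $c'\notin\la c\ra$ and observing that the isotropy groups of $\bbS(V)\times\bbS(V')$ meet $\la c,c'\ra\cong\bbZ/p\times\bbZ/p$ trivially and hence have rank $\leq 1$; no bundle construction is needed. This reduction is not merely cosmetic, because the class function of Proposition \ref{pro:ClassFunction} is constructed (and the family $\cH=\{H : H\cap Z(G)=1\}$ is analyzed) under the hypothesis that the center is cyclic, so your appeal to \cite[Proposition 3.3]{klaus} is only licensed after that reduction has been made.
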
 

To prove Proposition \ref{prop:main}, we use the same strategy as the one used for constructing free rank two $p$-group actions on a product of two spheres. We start with a linear $G$-action on $X=\bbS(V)$ where $V$ is the induced representation $\Ind _{\langle c \rangle} ^G W$, the element $c$ is a central element of order $p$ in $G$, and $W$ is a one-dimensional nontrivial representation of $\langle c \rangle$.

The isotropy subgroups of $G$-action on $X$ satisfy the property that $G_x\cap \la c\ra=1$.  Let $\cH$ denote the set of all subgroups $H\leq G$ such that $H \cap \la c\ra =1$. Note that subgroups in $\cH$ have $\rk (H)\leq 2$. We will prove Proposition \ref{prop:main} by applying Theorem \ref{thm:AlmostConnected} to the manifold $X$ using the family $\cH$. 

There is a further reduction which allows us to focus on rank three $p$-groups with cyclic center. We now explain this reduction. Suppose that the center $Z(G)$ of $G$ has $\rk Z(G)\geq 2$. Then there is a central element $c'\in G$ of order $p$ such that $c'\not \in \langle c\rangle$. Using a one-dimensional nontrivial representation $W' : \la c' \ra \to \bbC ^{\times}$, we can define an induced representation $V'=\Ind _{\la c' \ra} ^G W'$. The $G$-action on $\bbS(V) \times \bbS (V')$ is a smooth action and all its isotropy subgroups have trivial intersections with the central subgroup $\la c, c' \ra \cong \bbZ/p\times \bbZ/p$. This means that all isotropy subgroups of this action have rank $\leq 1$. Hence the conclusion of Proposition \ref{prop:main} holds for the case $\rk Z(G)=2$. Therefore, from now on we can assume that $G$ has cyclic center. 

To prove Proposition \ref{prop:main} we need a compatible family of representations $\bV=\{ V_H \}$ defined on $\cH=\{ H \leq G \colon H \cap Z(G)=1\}$ satisfying the following properties:

\begin{enumerate}
\item $\bV$ factors through an almost strongly connected diagram of finite groups $\Gamma_*$ with associated quadruple $(\Gamma_*, \rho_*, \cH_*, \bA _*)$.   
\item For every rank two elementary abelian subgroup  $E\in \cH$, the $E$-representation $V_E$ is a fixed point free representation.
\end{enumerate}

Note that once we find such a compatible family, the conclusion of Theorem \ref{thm:AlmostConnected} gives a smooth $G$-action on $X \times \bbS ^m$ for some $m\geq 1$, such that isotropy subgroups are the same as the isotropy subgroups of $H$-actions on $\bbS (V_H)$. By the condition (ii) above, this means that all the isotropy subgroups will have rank $\leq 1$. Therefore once we find a compatible family $\bV$ satisfying the properties listed above, the proof of Proposition \ref{prop:main}, and hence the proof of Theorem \ref{thm:main}, will be complete.  
 
As discussed in the introduction, this compatible family comes from an effective class function introduced by Jackson \cite[Proposition 20]{jackson2} in an unpublished work. It was proved later by Klaus \cite[Proposition 3.3]{klaus} in detail that this class function satisfies the desired properties.  Klaus \cite{klaus} used this function to construct a free action on a finite CW-complex homotopy equivalent to a product of three spheres.   

\begin{proposition}\label{pro:ClassFunction}
Let $p$ be an odd prime and $G$ be a rank three $p$-group with cyclic center. Let $\cH$ denote the family of all subgroups $H$ in $G$ such that $H \cap Z(G)=1$. There is a nontrivial class function $\chi : G\to \bbC$ with the following properties: (i) the restriction of $\chi$ to a subgroup $H \in \cH$ is a character of $H$; (ii) for every rank two elementary abelian $p$-subgroup $E\in \cH$ the restriction $\Res ^G_E \chi$ is a character of a fixed point free representation.
\end{proposition}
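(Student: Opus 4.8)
The plan is to follow Jackson's explicit construction \cite{jackson2}, whose verification is carried out in detail by Klaus \cite{klaus}, organized around one structural observation that makes the family $\cH$ tractable. Let $\la c\ra=\Omega_1(Z(G))$ be the unique subgroup of order $p$ in the cyclic center. First I would record that every rank three elementary abelian subgroup $A\le G$ contains $\la c\ra$: otherwise $A\cap \la c\ra=1$, and since $c$ is central of order $p$ the subgroup $A\la c\ra$ would be elementary abelian of rank four, contradicting $\rk(G)=3$. Two consequences follow at once. First, every $H\in\cH$ has $\rk(H)\le 2$, since a rank three elementary abelian subgroup of $H$ would force $c\in H\cap Z(G)=1$. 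Second, a rank two elementary abelian $E\in\cH$ is exactly a complement to $\la c\ra$ inside the rank three elementary abelian $A_E:=E\la c\ra=E\oplus\la c\ra$.

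Next I would translate condition (ii) into a linear condition on $\chi|_{A}$ for the rank three subgroups $A$. Identifying $\widehat{A}\cong\bbF_p^3$, let $W\le\widehat A$ be the hyperplane of characters trivial on $c$. For a complement $E$ to $\la c\ra$ the annihilator $E^\perp$ is a line meeting $W$ only at $0$ (if $E^\perp\subseteq W$ then $c\in E^{\perp\perp}=E$), and if $m_\theta$ denotes the multiplicity of $\theta$ in $\chi|_A$ then $\la \Res^G_E\chi,\,1_E\ra=\sum_{\theta\in E^\perp}m_\theta$. Thus (ii) asks precisely that these multiplicities sum to zero along every line of $\widehat A$ transverse to $W$. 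The key point is that $A\notin\cH$, because $c\in A$, so condition (i) imposes no positivity on $\chi|_A$; the $m_\theta$ are allowed to be negative, and this slack is what permits all transverse line-sums to vanish without forcing $\chi|_A=0$ — for nonnegative multiplicities a vanishing line-sum would force every term on that line to vanish.

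For the construction itself I would start from the induced character $\psi=\Ind_{\la c\ra}^G\lambda$, with $\lambda$ a nontrivial character of $\la c\ra$; this is the character of the representation $V$ already in play. By Mackey and the centrality of $c$, for every $H\in\cH$ one has $\Res^G_H\psi=\tfrac{[G:H]}{p}\,\bbC[H]$, a positive multiple of the regular representation. This provides abundant positivity, so that $\chi|_H$ stays a genuine character as long as the correction I subtract has, on each $H$, constituents dominated by those of $\psi|_H$. I would then assemble a corrective virtual character $\eta$ by M\"obius inversion over the poset of elementary abelian subgroups — equivalently from characters induced from the rank two elementaries and their overgroups — chosen so that on each rank three $A$ the transverse line-sums of $\chi:=\psi-\eta$ vanish. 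Condition (ii) then holds by design, and condition (i) follows because $\chi|_H=\psi|_H-\eta|_H$ has nonnegative multiplicities; if needed, genuineness can be confirmed through Brauer's characterization together with this constituentwise domination.

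The hard part will be global consistency. A single $\eta$ must simultaneously enforce the vanishing of every transverse line-sum on every rank three elementary abelian and keep $\chi|_H$ positive on every $H\in\cH$, including the rank two subgroups of order larger than $p^2$. That the latter is a real constraint is visible already from the naive uniform correction $\psi-\tfrac{|G|}{p^3}\,1_G$, which does kill the trivial constituent on the rank two elementaries but drives it negative on $\cH$-subgroups of order $p^3$. Balancing these competing demands across the entire subgroup lattice, using the cyclic-center hypothesis to control the overgroups of the rank two elementaries, is the delicate combinatorial core of the argument; this is exactly the content of Jackson's formula \cite{jackson2} and its verification by Klaus \cite{klaus}, which I would invoke rather than re-derive.
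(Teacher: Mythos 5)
Your preliminary reductions are correct and genuinely useful: every rank three elementary abelian subgroup of $G$ contains $\Omega_1(Z(G))=\la c\ra$, so members of $\cH$ have rank at most two; the reformulation of condition (ii) as the vanishing of the multiplicity sums of $\chi|_{A}$ along the lines of $\widehat{A}$ transverse to the hyperplane $\la c\ra^{\perp}$ is accurate; and the computation $\Res^G_H\Ind_{\la c\ra}^G\lambda=\tfrac{[G:H]}{p}\,\bbC[H]$ for $H\in\cH$ is right and correctly identifies where the positivity needed for (i) must come from. But the argument stops exactly where the proposition begins. The existence of the corrective virtual character $\eta$ \emph{is} the content of the statement, and you neither write it down nor prove it exists: ``assemble $\eta$ by M\"obius inversion over the poset of elementary abelian subgroups \dots\ chosen so that the transverse line-sums vanish'' describes what $\eta$ must accomplish, not how to produce it, and you then explicitly defer the ``delicate combinatorial core'' to Jackson and Klaus. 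A blind proof cannot outsource precisely the assertion being proved. It is also not clear that the route you sketch closes up: distinct rank three elementary abelians intersect one another, the rank two elementary abelians in $\cH$ fall into several conjugacy classes, and the positivity constraints coming from the non-elementary members of $\cH$ of order $p^3$ and larger --- which you correctly flag as the obstruction to the naive uniform correction --- are never confronted.

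For comparison, the paper's proof does exhibit $\chi$ explicitly, and its shape is quite different from your sketch. Since $p$ is odd and $G$ is noncyclic, $G$ has a \emph{normal} subgroup $Q\cong C_p\times C_p$; because $Z(G)$ is cyclic, $C_G(Q)$ has index $p$ in $G$, and $\chi$ is given by a closed formula whose value at $g$ depends only on whether $g$ lies in $Z(G)$, in $Q$, in $C_G(Q)$, or outside $C_G(Q)$, and on whether $g$ has order $p$; in particular $\chi$ is supported on the identity and on certain elements of order $p$, and it is not built from the elementary abelian poset at all. Conditions (i) and (ii) are then verified by direct character computations on the subgroups in $\cH$, for which the paper cites Klaus (so the deferral of the routine verification is legitimate there --- but only because the formula itself is on the page). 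To repair your write-up you would need at minimum to state this formula, or an equivalent one, and carry out or precisely locate the check of (i) on the cyclic and nonabelian members of $\cH$ of order at least $p^3$; that is where the actual work of the proposition lies.
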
 

\begin{proof} When $p$ is an odd prime, every noncyclic $p$-group has a normal subgroup isomorphic to $C_p\times C_p$ (see \cite[Theorem 4.10]{gorenstein}), hence $G$ has a normal subgroup $Q \cong C_p\times C_p$. Let $C_G(Q)$ denote the centralizer of $Q$ in $G$. Consider the class function  $\chi : G \to \bbC$ defined by 
\begin{equation*} \chi (g)=
\begin{cases} 
p(p-1)|G| & \text{if $g=1$} \\  
0 & \text{if $g\in Z(G)\backslash \{1\}$} \\
-p|G| & \text{if   $g \in Q\backslash Z(Q)$} \\ 
0& \text{if $g\in C_G(Q)\backslash Q$} \\
-|G| & \text{if $g \in G\backslash C_G(Q)$ of order $p$} \\
0 & \text{if $g\in G\backslash C_G(Q)$ of order greater than $p$}.
\end{cases}
\end{equation*}
It can be shown by direct calculation that both statements hold for $\chi$ (see  \cite[Proposition 3.3]{klaus}). 
\end{proof}

Let $\chi$ be the character as in the proof of Proposition \ref{pro:ClassFunction}, and let $\bV_{\chi}$ denote the compatible family of representations defined over $\cH$ such that for every $H\in \cH$, the character for the representation $V_H$ is equal to $\Res ^G _H \chi$. It is clear that the family $\bV_{\chi}$ is a compatible family since it comes from a class function. We claim that $\bV_{\chi}$ satisfies the conditions $(1)$ and $(2)$ listed above, for a suitable choice of quadruple $(\Gamma_*, \rho_*, \cH_*, \bA _*)$. In the rest of this section we introduce the components of this quadruple and show that they satisfy the required properties.

To introduce $\cH_*$, we need to look at the subgroups in $\cH$ more closely. Let $Q$ be a normal subgroup of $G$, isomorphic to $C_p \times C_p$ as in the proof of Proposition \ref{pro:ClassFunction}. Since $Z(G)$ is cyclic, $Z(G)\cap Q=\la c\ra$ is a cyclic group of order $p$. Let $a$ be a non-central element in $Q$. We have $Q=\la c, a\ra \cong \la c\ra \times \la a \ra$.

Let $C_G(Q)$ denote the centralizer of $Q$ in $G$. Since the quotient group $G/C_G(Q)$ acts faithfully on $Q\cong C_p\times C_p$, it must be isomorphic to a subgroup of $GL_2 (\bbF_p)$. Since $|GL_2 (\bbF_p)|=(p^2-1)(p^2-p)$, we can conclude that $|G/C_G (Q)|=p$. Furthermore, we have the following lemma.

\begin{lemma}[See Proposition 3.2 in \cite{klaus}]\label{lem:firsttype}
Let $G$,  $\cH$  and $Q$ be as above. 
If $H\in \cH$ is such that $H \cap Q\neq 1$, then $H \leq C_G(Q)$ and there exists $g\in G$ such that $Q \cap gHg^{-1}=\la a\ra$.
\end{lemma}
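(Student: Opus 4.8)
The plan is to prove Lemma~\ref{lem:firsttype} in two stages, handling the containment $H \leq C_G(Q)$ first and then the conjugacy statement. Let $H \in \cH$ with $H \cap Q \neq 1$. Since $Q \cong C_p \times C_p$ and $H \cap Q$ is a nontrivial subgroup of $Q$, it is a cyclic group of order $p$, say $H \cap Q = \la x \ra$ for some $x \in Q$ of order $p$. The defining property of $\cH$ is that $H \cap Z(G) = 1$, so since $\la c \ra = Z(G) \cap Q$, we must have $x \notin \la c \ra$; that is, $x$ is a non-central element of $Q$. The first key observation is that any non-central element of $Q$ generates, together with $c$, the whole of $Q$: indeed $\la x \ra$ and $\la c \ra$ are distinct subgroups of order $p$ in $Q \cong C_p \times C_p$, so $\la c, x \ra = Q$.

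First I would establish $H \leq C_G(Q)$. Since $x \in H$ and $c$ is central in $G$, the element $x$ commutes with $c$; but I want all of $H$ to centralize $Q$. The point is that $H$ normalizes $H \cap Q = \la x \ra$ (it is the intersection of $H$ with a normal subgroup $Q$, hence normal in $H$), so $H$ acts by conjugation on $\la x \ra$. Because $c$ is central, $H$ also fixes $\la c \ra$, and since $Q = \la c, x \ra$, the subgroup $H$ normalizes $Q$ and acts on $Q$ fixing both the line $\la c \ra$ and the line $\la x \ra$. The action of the $p$-group $H$ on $Q \cong \bbF_p^2$ is by a $p$-element of $GL_2(\bbF_p)$ fixing two distinct lines; such an element is forced to be trivial on $Q$ (a nontrivial unipotent element of $GL_2(\bbF_p)$ fixes a unique line, a semisimple one of $p$-power order must be the identity). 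Hence $H$ acts trivially on $Q$, i.e. $H \leq C_G(Q)$.

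Next I would produce the element $g$ with $Q \cap gHg^{-1} = \la a \ra$. We already know $H \cap Q = \la x \ra$ with $x$ a non-central element of $Q$, so $\la x \ra$ and $\la a \ra$ are two order-$p$ subgroups of $Q$ both distinct from $\la c \ra = Z(Q) \cap Z(G)$. Since $|G/C_G(Q)| = p$ (established just before the lemma), the group $G/C_G(Q)$ embeds in $GL_2(\bbF_p)$ as a subgroup of order $p$, hence as the unipotent subgroup fixing the central line $\la c \ra$ and acting transitively on the remaining $p-1$ non-central lines of $Q$. Conjugation by a suitable $g \in G$ therefore sends the line $\la x \ra$ to the line $\la a \ra$ while fixing $\la c \ra$; concretely, choose $g$ whose image in $G/C_G(Q)$ is the unipotent element carrying $x$ to a generator of $\la a \ra$. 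Then $g \la x \ra g^{-1} = \la a \ra$. Since conjugation by $g$ preserves the normal subgroup $Q$ and $H \cap Q = \la x \ra$, we get $gHg^{-1} \cap Q = g(H \cap Q)g^{-1} = \la a \ra$, as required.

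The main obstacle is the second stage: one must verify that $G/C_G(Q)$ really acts as the full order-$p$ unipotent subgroup of $GL_2(\bbF_p)$ fixing $\la c \ra$ and permuting the other lines transitively, which is what guarantees the existence of $g$ moving $\la x \ra$ to $\la a \ra$. This relies on knowing that $c$ is central in $G$ (so its line is fixed) and that the image of $G$ in $GL_2(\bbF_p)$ is a nontrivial $p$-group; a nontrivial unipotent subgroup of order $p$ in $GL_2(\bbF_p)$ indeed acts simply transitively on the $p$ non-central lines through the fixed line's complement, so every non-central line is reachable from every other. The first stage, by contrast, is the essentially formal linear-algebra argument that a $p$-element of $GL_2(\bbF_p)$ fixing two distinct lines is trivial. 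I would also remark that, as noted in the cited \cite[Proposition 3.2]{klaus}, this is exactly the structural input that lets one stratify $\cH$ according to the intersection $H \cap Q$.
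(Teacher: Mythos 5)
Your proof is correct and follows essentially the same route as the paper: the paper handles the first stage by noting that $H\cap Q$ is a normal subgroup of order $p$ in the $p$-group $H$, hence central in $H$ (your two-fixed-lines argument in $GL_2(\bbF_p)$ is the same fact in different clothing), and handles the second stage by explicitly choosing $b\notin C_G(Q)$ with $b^{-1}ab=ac$ and taking $g=b^i$, which is precisely your transitivity-on-lines claim made concrete. One minor slip: $Q$ has $p$ non-central lines, not $p-1$ (you state the correct count later in the same paragraph, and nothing in the argument depends on it).
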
 

\begin{proof} Since $H\cap \la c\ra=1$, we have $H \cap Q=\la ac^i\ra$ for some $i$. Since $\la ac^i\ra$ is a normal subgroup of order $p$ in $H$, it is a central subgroup of $H$. This means $H$ centralizes $ac^i$, and hence it centralizes $Q$. 
To prove the second statement, let $b\in G$ denote an element such that $b \not \in C_G(Q)$. Then, by replacing $b$ with its power we can assume that
$b^{-1} ab=ac$. This shows that if we take $g=b^i$, then $Q \cap gH g^{-1}=\la a\ra$.  
\end{proof}

We will also need the following lemma.

\begin{lemma}
\label{lem:BEtypes}
Let $H \in \cH$ be such that $H \not \leq C_G(Q)$. Then, $K=H\cap C_G(Q)$ is a cyclic group and $H$  is either cyclic or it is isomorphic to $K \rtimes C_p$ where $C_p$ acts on $K$ either trivially or by the action $k\to k^{1+p^{n-1}}$ where $p^n= |K|$. 
\end{lemma}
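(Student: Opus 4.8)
The statement concerns a subgroup $H \in \cH$ with $H \not\leq C_G(Q)$, and asks us to understand the structure of $H$ together with $K = H \cap C_G(Q)$. The plan is to exploit the central facts already established: $Z(G)$ is cyclic, $|G/C_G(Q)| = p$, and every $H \in \cH$ satisfies $H \cap \la c \ra = 1$ where $\la c \ra = Z(G) \cap Q$. The first step is to analyze the index $[H : K]$. Since $C_G(Q)$ is normal of index $p$ in $G$ (because $G/C_G(Q)$ embeds in $GL_2(\bbF_p)$ and has order $p$), the intersection $K = H \cap C_G(Q)$ is normal in $H$ and $H/K$ embeds in $G/C_G(Q) \cong C_p$. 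The hypothesis $H \not\leq C_G(Q)$ forces $[H:K] = p$, so that $H/K \cong C_p$.

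\textbf{The structure of $K$.} Next I would show $K$ is cyclic. The key observation is that $K \leq C_G(Q)$, so $K$ centralizes $Q$ and in particular centralizes $c$. I claim $K$ must be cyclic: if $K$ were noncyclic, then since $p$ is odd, $K$ would contain a subgroup isomorphic to $C_p \times C_p$, and one would want to produce an element of $\la c \ra$ inside $K$, contradicting $K \cap \la c \ra \subseteq H \cap \la c \ra = 1$. The cleaner route is to use rank: $H \in \cH$ has $\rk(H) \leq 2$, and any noncyclic abelian-by-whatever structure here would, combined with the central $C_p \times C_p$ worries, push the rank or force a nontrivial intersection with $Z(G)$. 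Concretely, since $K$ centralizes $Q$ and $\la c \ra \leq Q \cap Z(G)$, if $K$ contained a rank-two elementary abelian subgroup $E$, then $E$ together with $\la c \ra$ would generate a subgroup of rank three inside $C_G(Q)$ that is abelian (as everything centralizes $Q$ and $c$ is central), contradicting $\rk(G) = 3$ only if they are independent — so the real content is that $K \cap \la c \ra = 1$ forces $K$ to avoid the central line, and a rank-two $K$ would then give $\rk(\la K, c\ra) = 3$ with the extra generator $c$ central, which is fine for $G$ but the abelianness forces all of $\la K, c\ra$ into $\cH$-violating territory. I would pin this down by showing any noncyclic $K$ yields an elementary abelian subgroup meeting $\la c\ra$ nontrivially, giving the contradiction; hence $K$ is cyclic, say $|K| = p^n$.

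\textbf{The extension and the action.} With $K \cong C_{p^n}$ cyclic and normal in $H$ of index $p$, the extension $1 \to K \to H \to C_p \to 1$ is governed by the action of a generator $t$ of $H/K$ on $K = \Aut(C_{p^n})$. For $p$ odd, $\Aut(C_{p^n}) \cong C_{p^{n-1}} \times C_{p-1}$ is cyclic, and the only automorphisms of $p$-power order are those in the $C_{p^{n-1}}$ factor, generated by $k \mapsto k^{1+p^{n-1}}$; since $t$ has order dividing $p$ acting on $K$, its action is either trivial or is this order-$p$ automorphism $k \mapsto k^{1+p^{n-1}}$. If the action is trivial, $H$ is abelian of rank $\leq 2$; combined with the constraint $H \cap \la c\ra = 1$ and the fact that $H \not\leq C_G(Q)$ (so $H$ has an element moving $Q$), I would check whether $H$ can be cyclic or splits as $K \times C_p$, and verify the extension splits when the action is nontrivial (lifting $t$ to an order-$p$ element, which works because the obstruction in $H^2(C_p; K)$ vanishes for the relevant actions when $p$ is odd). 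The main obstacle I expect is this last splitting/classification step: ruling out nonsplit extensions and confirming that the abelian (trivial-action) case collapses to either $H$ cyclic or $H \cong K \rtimes C_p$ with trivial action, which requires care with the cohomology $H^2(C_p; C_{p^n})$ and the global constraint $\rk(G) = 3$ to exclude stray rank-two elementary abelian pieces that would force $H \not\in \cH$.
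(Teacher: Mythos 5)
Your step establishing that $K$ is cyclic has a genuine gap, and you essentially flag it yourself without closing it. Working only with the central line $\la c\ra$ cannot succeed: if $K$ contained a rank-two elementary abelian $E$ with $E\cap\la c\ra=1$, then $\la E,c\ra$ has rank three, which is perfectly consistent with $\rk(G)=3$, so there is no contradiction — exactly the dead end you run into. Your proposed repair, ``any noncyclic $K$ yields an elementary abelian subgroup meeting $\la c\ra$ nontrivially,'' is not correct as stated: since $E$ centralizes $Q$, the product $EQ$ is elementary abelian of rank $\leq 3$, which forces $E\cap Q\neq 1$, but that intersection could be a non-central line $\la ac^i\ra$ rather than $\la c\ra$. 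The missing ingredient is Lemma \ref{lem:firsttype}: its contrapositive says that $H\not\leq C_G(Q)$ forces $H\cap Q=1$, hence $K\cap Q=1$. Since $K\leq C_G(Q)$ centralizes $Q$, this gives $QK\cong Q\times K$, and $\rk(Q\times K)=2+\rk(K)\leq\rk(G)=3$ yields $\rk(K)\leq 1$, i.e., $K$ is cyclic. This is the paper's argument; the point is that one must play $K$ against the full rank-two subgroup $Q$, not just against $Z(G)$.

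Your remaining steps are essentially sound but also left unfinished. That $[H:K]=p$ is correct and matches the paper. For the final classification, the paper simply invokes the standard classification of $p$-groups ($p$ odd) containing a cyclic subgroup of index $p$ (Brown, Theorem IV.4.1): such a group is cyclic, or $C_{p^n}\times C_p$, or the modular group with the action $k\mapsto k^{1+p^{n-1}}$. Your analysis of the automorphism group of a cyclic $p$-group is the right start, but you defer the split/nonsplit case analysis (``the main obstacle I expect''), which is precisely what the cited theorem handles — note in particular that the nonsplit trivial-action extensions are exactly the cyclic groups, which is why ``cyclic'' appears as an alternative in the statement. As written, the proposal does not constitute a complete proof.
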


\begin{proof} Let $H \in \cH$ be such that $H \not \leq C_G(Q)$. Then, by Lemma \ref{lem:firsttype}, $H \cap Q=1$, in particular, $K \cap Q=1$. This implies that $QK\cong Q \times K$. Since $Q \cong C_p \times C_p$, we must have $\rk (K)\leq 1$, hence $K$ is a cyclic group.  Note that $|H:K|=p$, hence by \cite[Theorem IV.4.1]{brown}, we conclude that $H$ is either cyclic or it is isomorphic to $K \rtimes C_p$ where $C_p$ acts on $K$ either trivially or by the action $k\to k^{1+p^{n-1}}$ where $p^n=|K|$. 
\end{proof}

Now we list all possible types of subgroups in $\cH$ with respect to their relationship to $Q$ and $C_G(Q)$.

\begin{enumerate}[leftmargin=1.8em]
\itemsep5pt \parskip0pt \parsep0pt

\item A subgroup $H \in \cH$ is called a type A subgroup if $H \leq C_G(Q)$. We define the subcollection $\cH _a \subseteq\cH$ as the family of all type A subgroups. Since $C_G(Q)$ is normal in $G$, this is a family, i.e., it is closed under conjugation and taking subgroups.

\item Let $H \in \cH$ be such that $H \not \leq C_G(Q)$. Then, by Lemma \ref{lem:BEtypes},  $H$ is either cyclic or it is isomorphic to $K \rtimes C_p$ where $C_p$ acts on $K$ either trivially or by the action $k\to k^{1+p^{n-1}}$ where $p^n=|K|$. If $K$ is cyclic we call it a type B subgroup, otherwise, we call it a type E subgroup. Note that every type E subgroup has a unique elementary abelian subgroup of rank $2$. This can be easily checked by looking at the subgroup lattice (see also \cite[Lemma 2.1]{mazza}). Let $E_1, \dots, E_m$ denote the conjugacy class representatives of maximal elementary abelian subgroups of type E subgroups. For each $i$, we defined the family $\cH_{e_i}$ as the family of type E subgroups such that $E_i \leq_G H$. Note that if $H \in \cH_{e_i}$, then $H \leq _G N_G(E_i)$.  

\item If $H\in \cH $ such that $H\leq C_G(Q)$ and it is included in a type B or a type E subgroup, then we call it a type C subgroup. Note that type C subgroups are necessarily cyclic.    
\end{enumerate}

\begin{lemma} Let $\cD$ be the discrete poset $\{ a, e_1,\dots, e_m\}$. For each $d\in \cD$, let $\cH_d$ be the subfamily defined as above. Then the diagram of subfamilies $\cH_*$ is almost strongly connected.
\end{lemma}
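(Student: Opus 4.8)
The plan is to reduce the statement to a purely combinatorial count and then read it off from the classification of subgroups in $\cH$. Since $\cD=\{a,e_1,\dots,e_m\}$ carries no nonidentity morphisms, for each $H\in\cH$ the subposet $\cD_H=\{d\in\cD \colon H\in\cH_d\}$ is a discrete set, whose realization is a disjoint union of $|\cD_H|$ points; such a space is simply connected precisely when $|\cD_H|=1$. Because $G$ is a $p$-group, every subgroup has prime power order, so ``cyclic of prime power order'' just means ``cyclic.'' Comparing with Definition \ref{def:almost strongly connected G_*}, it therefore suffices to prove that every \emph{noncyclic} $H\in\cH$ lies in exactly one of the families $\cH_a,\cH_{e_1},\dots,\cH_{e_m}$; cyclic subgroups require no attention, since for them $\cD_H$ is automatically a disjoint union of points, which is all the definition demands. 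Throughout I read $\cH_{e_i}$ as the subfamily generated, under conjugation and passage to subgroups, by the type E subgroups $H$ with $E_i\leq_G H$; concretely, $H'\in\cH_{e_i}$ iff $H'$ is $G$-subconjugate to such a type E subgroup.

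Next I would invoke the classification above: a noncyclic $H\in\cH$ is either a noncyclic type A subgroup or a type E subgroup. Indeed, if $H\leq C_G(Q)$ it is type A, while if $H\not\leq C_G(Q)$ then Lemma \ref{lem:BEtypes} shows $H$ is cyclic (type B) or isomorphic to $K\rtimes C_p$ (type E), so a noncyclic such $H$ is type E. I would dispose of the type A case first. Here $H\in\cH_a$ by definition, and I claim $H$ lies in no $\cH_{e_i}$. If it did, $H$ would be $G$-subconjugate to a type E subgroup $\widetilde H=K\rtimes C_p$; since $C_G(Q)\trianglelefteq G$, a conjugate of $H$ would then lie in $\widetilde H\cap C_G(Q)=K$, which is cyclic, contradicting that $H$ is noncyclic. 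Hence $\cD_H=\{a\}$ and $|\cD_H|=1$.

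Finally I would treat the type E case, where the real content sits. Such an $H$ is not contained in $C_G(Q)$, so $H\notin\cH_a$. By the classification, $H$ has a \emph{unique} rank-two elementary abelian subgroup $E_H$, and $E_H$ is $G$-conjugate to exactly one representative $E_i$; thus $H\in\cH_{e_i}$. The point is that $H$ belongs to no other $\cH_{e_j}$: if $H$ were $G$-subconjugate to a type E subgroup $\widetilde H'$ with $E_{\widetilde H'}\sim_G E_j$, then a conjugate of $E_H$ would be a rank-two elementary abelian subgroup of $\widetilde H'$, hence equal to the unique such subgroup $E_{\widetilde H'}$, giving $E_i\sim_G E_H\sim_G E_{\widetilde H'}\sim_G E_j$ and so $i=j$. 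Therefore $\cD_H=\{e_i\}$. Combining the two cases, every noncyclic subgroup satisfies $|\cD_H|=1$, which proves that $\cH_*$ is almost strongly connected. The main obstacle is exactly this last uniqueness argument: everything hinges on a type E subgroup having precisely one rank-two elementary abelian subgroup, as that is what prevents a single subgroup from being drawn into two distinct families $\cH_{e_i}$ and $\cH_{e_j}$.
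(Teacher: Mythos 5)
Your proof is correct and follows essentially the same route as the paper: both reduce the claim to showing that any subgroup lying outside the union of the families, or in the intersection of two distinct families, must be cyclic, and both hinge on the fact that a type E subgroup has a unique rank-two elementary abelian subgroup (plus the observation that $\widetilde H\cap C_G(Q)$ is cyclic for a type E subgroup $\widetilde H$, which is the paper's ``type C'' remark). Your reformulation in terms of $|\cD_H|=1$ for noncyclic $H$ is just a logically equivalent packaging of the paper's intersection argument.
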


\begin{proof} Note that the only subgroups $H \in \cH$ which are not in the union $\cH_a \cup (\cup _i \cH _{e_i})$ are type $B$ subgroups, so they are all cyclic. The intersections of families $\cH_d$ for various $d\in \cD$ are easy to describe. We already observed above that if $H \in \cH_a \cap \cH_{e_i}$ for some $i$, then $H$ is a type $C$ subgroup which is again cyclic. Now suppose $H \in \cH_{e_i} \cap \cH_{e_j}$ for some $i\neq j$.
Then $H$ is either cyclic or it is a type $E$ subgroup such that $E_i \leq _G H $ and $E_j \leq _G H$. Since all type $E$ subgroups have a unique elementary abelian rank 2 subgroup, this will imply that $gE_i g^{-1}=E_j$ for some $g\in G$. But the subgroups $E_i$ and $E_j$ were chosen as distinct conjugacy class representatives, so this is not possible. Hence, every subgroup in $\cH_{e_i} \cap \cH_{e_j}$ is cyclic when $i\neq j$. We conclude that $\cH_*$ is almost strongly connected.
\end{proof}

We now describe the diagram of finite groups $\Gamma_*$ and the diagram of families $\bA_*$. For each $d\in \cD$, let $$\Gamma _d=\begin{cases} C_G(Q) \ \text{ if }\ d=a \\N_G(E_i)\ \text{ if }\ d=e_i\ \ . \\ 
\end{cases}$$ Since $\cD$ is a discrete category, it is clear that this is a functor from $\cD$ to finite groups.
For each $d\in \cD$, we define a compatible family of representations $$\bA_d=\{\alpha_H ^d :H \to \Gamma_d \mid H \in \cH_d \}$$ by taking $\alpha ^d _H$ as the 
composition $$\alpha ^d _H: H \maprt{c_g} gHg^{-1} \hookrightarrow \Gamma_d $$
where the conjugation map $c_g$ is defined by  $h \to ghg^{-1}$ and the second map is the inclusion map of $gHg^{-1}$ into $\Gamma _d$. For type E groups, we do this by choosing an arbitrary element $g\in G$ such that $gHg^{-1} \subseteq \Gamma _d$. For type A groups, we take a $g\in G$ such that $Q \cap gHg^{-1}=\la a\ra$. Such an element $g\in G$ always exists by Lemma \ref{lem:firsttype}.

To introduce the collection of representations $\rho_*$, we first introduce some notation. For a $K$-set $X$, where $K \leq G$, we denote by $I_X$ the reduced permutation representation $\bbC X -\bbC$. For example, with this notation, $I_{\la a \ra/1}$ denotes the reduced regular representation of $\la a\ra$.
For each $i=1,\dots, m$, let $C_i$ denote the cyclic subgroup $E_i \cap C_G(Q)$ in $ E_i$. Let  $W_i$ denote the $E_i$-representation $I_{E_i/C_i} + (p-1) I_{E_i/1}$.
For every $d \in \cD$, we define
$$ \rho_d =
\begin{cases} 
n_a\Ind _{\la a\ra}^{C_G(Q)} I_{\la a\ra/1} &  \text{if $d=a$}\\  
n_{e_i}\Ind _{E_i} ^{N_G(E_i)} W_i & \text{if $d=e_i$}. 
\end{cases}$$
The numbers $n_a$ and $n_{e_i}$ are chosen as positive integers such that the equalities 
$$n_a(p-1)\frac{|G|}{p^2}=n_{e_i}(p-1)|N_G(E_j)|=p(p-1)|G|$$
hold. Note that $n_a=p^3$ and $n_{e_i}=p|G|/|N_G(E_i)|$ for all $i$.

\begin{lemma}\label{lem:FactorsThrough} Let  $G$ and $\cH$ be as above, and let $\bV_{\chi}$ denote the compatible family of representations on $\cH$ defined using the class function $\chi$ of Proposition \ref{pro:ClassFunction}. Then $\bV_{\chi}$ factors through a diagram of finite groups $\Gamma _*$ with associated the quadruple  $(\Gamma_*, \rho_*, \cH_*, \bA_* )$ whose components are as introduced above. 
\end{lemma}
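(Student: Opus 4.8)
The plan is to verify that the quadruple $(\Gamma_*,\rho_*,\cH_*,\bA_*)$ already assembled above satisfies Definition \ref{def:factors through G_*}. The decisive simplification is that the poset $\cD=\{a,e_1,\dots,e_m\}$ is discrete, so it has no non-identity morphisms; consequently every condition phrased in terms of the structure maps $\mu_{x,y}$ (functoriality of $\Gamma_*$, the isomorphisms $\rho_x\cong\rho_y\circ\mu_{x,y}$, and the cross-object compatibility of $\bA_*$) is vacuous. Thus the content collapses to three per-object checks: that each $\bA_d=(\alpha_H^d)_{H\in\cH_d}$ is a compatible family over $\cH_d$; that $\rho_a$ and the $\rho_{e_i}$ share a common degree $n$; and, crucially, that $\bV_\chi|_{\cH_d}=\rho_d\circ\bA_d$ for each $d$. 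The first is routine, since each $\alpha_H^d$ is a conjugation in $G$ followed by an inclusion into $\Gamma_d$, and any two admissible conjugating elements differ by an element of $\Gamma_d$, so $\alpha_H^d$ is well defined in $\Rep(H,\Gamma_d)=\Hom(H,\Gamma_d)/\Inn(\Gamma_d)$. Because complex representations of a finite group are determined by their characters and $\chi$ is a class function, the factorization equation for a fixed $H\in\cH_d$ is equivalent to the character identity $\chi_{\rho_d}(ghg^{-1})=\chi(ghg^{-1})$ for all $h\in H$, where $g$ is the conjugating element used in $\alpha_H^d$; hence it suffices to prove $\chi_{\rho_d}(y)=\chi(y)$ for every $y$ in the image $gHg^{-1}$, as $H$ ranges over $\cH_d$.

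For the degree count I would compute dimensions directly. Since $|G:C_G(Q)|=p$, one gets $\dim\rho_a=n_a(p-1)|C_G(Q)|/p=n_a(p-1)|G|/p^2$, while $\dim W_i=(p-1)+(p-1)(p^2-1)=(p-1)p^2$ gives $\dim\rho_{e_i}=n_{e_i}(p-1)|N_G(E_i)|$. The defining relations forcing $n_a=p^3$ and $n_{e_i}=p|G|/|N_G(E_i)|$ make both equal $p(p-1)|G|=\chi(1)$, so $\rho_*$ is a diagram of representations of common degree $n=p(p-1)|G|$.

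The case $d=a$ I expect to be clean. Since $C_G(Q)$ centralizes $Q$ we have $Q\subseteq Z(C_G(Q))$, so $a$ is central in $C_G(Q)$ and the induction formula for $\Ind_{\la a\ra}^{C_G(Q)}I_{\la a\ra/1}$ evaluates easily: it gives $(p-1)|C_G(Q)|/p$ at the identity, $-|C_G(Q)|/p$ on $\la a\ra\setminus\{1\}$, and $0$ on every $x\in C_G(Q)$ with $x\notin\la a\ra$. Scaling by $n_a=p^3$ turns these into $p(p-1)|G|$, $-p|G|$, and $0$. The crucial input is Lemma \ref{lem:firsttype}: for type A subgroups the conjugating element $g$ is chosen so that $Q\cap gHg^{-1}=\la a\ra$, whence every element of $gHg^{-1}$ lying in $Q$ is a genuine power of $a$ (never an $a^ic^j$ with $j\neq0$). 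On such powers $\chi=-p|G|$ since they lie in $Q\setminus\la c\ra$, matching $\chi_{\rho_a}$; on elements of $gHg^{-1}\setminus Q\subseteq C_G(Q)\setminus Q$ both $\chi$ and $\chi_{\rho_a}$ vanish. This is precisely why the special choice of $g$ is required, for without it the value $-p|G|$ would be demanded at elements $a^ic^j$ where $\chi_{\rho_a}=0$.

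The case $d=e_i$ is where I expect the real work, and it is the main obstacle. Here one must evaluate $\chi_{\rho_{e_i}}$ on all of $gHg^{-1}\subseteq N_G(E_i)$, with $H$ a type E subgroup or a subgroup thereof. A preliminary computation of $\chi_{W_i}$ on $E_i$ gives $(p-1)p^2$ at the identity, $0$ on $C_i\setminus\{1\}$, and $-p$ on $E_i\setminus C_i$, reflecting that $C_i=E_i\cap C_G(Q)$ consists of order-$p$ elements of $C_G(Q)\setminus Q$ (where $\chi=0$) while $E_i\setminus C_i$ consists of order-$p$ elements of $G\setminus C_G(Q)$ (where $\chi=-|G|$). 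One then pushes this through $\Ind_{E_i}^{N_G(E_i)}$, tracking how $N_G(E_i)$ fuses the elements of $E_i$ and how the elements of $gHg^{-1}$ outside $E_i$ contribute, namely the cyclic part lying in $C_G(Q)$ and the order-$p$ elements outside $C_G(Q)$, and compares the outcome case by case against the six-line definition of $\chi$ sorted by element order and by membership in $Q$ and $C_G(Q)$. This is exactly the direct calculation carried out in \cite[Proposition 3.3]{klaus}, and I would invoke it to conclude $\chi_{\rho_{e_i}}=\chi$ on the relevant elements, thereby establishing $\bV_\chi|_{\cH_{e_i}}=\rho_{e_i}\circ\bA_{e_i}$ and completing the proof of the lemma.
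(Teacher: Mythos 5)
Your overall strategy is the paper's: since $\cD$ is discrete, everything reduces to the per-object identity $\bV_\chi|_{\cH_d}=\rho_d\circ\bA_d$, verified by comparing characters; your degree count and your treatment of $d=a$ (including the observation that Lemma \ref{lem:firsttype} is what forces the normalization $Q\cap gHg^{-1}=\la a\ra$) agree with the paper's proof. The problem is the case $d=e_i$, which you correctly identify as the crux and then do not prove. Invoking \cite[Proposition 3.3]{klaus} does not close it: that result (restated here as Proposition \ref{pro:ClassFunction}) asserts that $\Res^G_H\chi$ is a character for $H\in\cH$ and is fixed point free on rank two elementary abelians; it does not assert the specific identity $\Res^G_{H}\chi=n_{e_i}\,\Res_H\Ind^{N_G(E_i)}_{E_i}\chi_{W_i}$ for the particular representation $\rho_{e_i}$ and multiplicity $n_{e_i}$ chosen in this paper. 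The paper carries out this computation directly in a few lines, and so should you.

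Moreover, your sketch of how that computation would go omits the one structural fact that makes it work. Since $E_i\trianglelefteq N_G(E_i)$, the induced character $\Ind^{N_G(E_i)}_{E_i}\chi_{W_i}$ vanishes on every element of $N_G(E_i)$ that does not lie in $E_i$; in particular it vanishes on any order-$p$ element of $gHg^{-1}$ outside $E_i$, whereas $\chi=-|G|$ on order-$p$ elements outside $C_G(Q)$. So the identity can only hold because every element of order $p$ in a type E subgroup containing $E_i$ already lies in $E_i$: for odd $p$ such a group is $K\rtimes C_p$ of nilpotency class at most $2$, so $\Omega_1$ has exponent $p$ and order $p^2$, hence coincides with the unique maximal elementary abelian subgroup, namely $E_i$. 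Your phrase about tracking ``the order-$p$ elements outside $C_G(Q)$'' as a contribution coming from elements of $gHg^{-1}$ outside $E_i$ suggests you have not used this; without it the case-by-case comparison you describe would appear to fail at exactly those elements. With this fact in hand the verification is immediate: on $E_i\setminus C_i$ one gets $n_{e_i}|N_G(E_i):E_i|(-p)=-|G|=\chi$, on $C_i\setminus\{1\}\subseteq C_G(Q)\setminus Q$ and on all elements of order greater than $p$ both sides vanish, and at the identity both sides equal $p(p-1)|G|$.
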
 

\begin{proof} We only need to show that for every $d\in \cD$, the restriction of $\bV _{\chi }$ to $\cH_d$ is equal to $\rho_d \circ \bA _d$. The rest of the conditions are clear from the construction of the quadruple.

If $d=e_i$ for some $i$, then we need to check that $\Res ^G _{N_G (E_i)} \chi=n_{e_i} \Ind ^{N_G(E_i)} _{E_i} \chi_{W_i}$, where $\chi_{W_i}$ denotes the character for $W_i$. For $g \in N_G(E_i)$,  $$\Bigl (\Ind ^{N_G(E_i)} _{E_i} \chi_{W_i} \Bigr ) (g)=
\begin{cases} 
|N_G(E_i):E_i|\chi_{W_i} (g) & \text{if $g$ has order $p$;}\\
0 & \text{if $g$ has order greater than $p$}.
\end{cases}$$ 
Note that for $g\in E_j$, we have $\chi _{W_i} (g)=0$ if $g\in C_G(Q)$ and $\chi _{W_i} (g)=-p$ if $g\not \in C_G(Q)$. Hence the desired equality holds.

When $d=a$, there is a similar calculation. Observe that if $H\in \cH_a$, then $\alpha _H ^a : H \to \Gamma _a$ is defined by first applying conjugation map $h \to ghg^{-1}$ followed by the inclusion map $gHg^{-1}$ into $\Gamma _a=C_G(Q)$, where the element $g $ is chosen such that $a\in gHg^{-1}$. So it is enough to check whether the equality $\Res ^G _{H} \chi=n_a \Res^{C_G(Q)} _H \Ind ^{C_G(Q)} _{\la a\ra} \chi_a$ holds for a subgroup $H\leq C_G(Q)$ that includes $a$. Here $\chi_a$ denotes the character for $I_{\la a\ra/1}$. If $g \in \la a\ra$, then $$n_a \Bigl ( \Ind ^{C_G(Q)} _{\la a\ra} \chi_a \Bigr )(g)=p^3 (|G|/p^2)\chi _a (g)=\begin{cases} p(p-1)|G| & \text{if $g=1$} \\ -p|G| & \text{if $g\neq 1$}. \end{cases}$$ If $g\in H \backslash \la a \ra$, then the character value is zero. Hence the desired character equality holds.
\end{proof}

\end{document}